\newtheorem{theorem}{Theorem}
\newtheorem{proposition}[theorem]{Proposition}
\newtheorem{lemma}[theorem]{Lemma}
\newtheorem{claim}[theorem]{Claim}
\newcommand{\blocks}{b}
\newcommand{\calG}{\mathcal{G}}
\newcommand{\calE}{\mathcal{E}}
\newcommand{\calC}{\mathcal{C}}
\newcommand{\calP}{\mathcal{P}}
\newcommand{\NN}{\mathbb{N}}
\newcommand{\indic}{\mathbb{I}}
\newcommand{\eqdef}{\stackrel{\mathrm{def}}{=}}
\newcommand{\expec}[2]{\mathbb{E}_{#1}\left(#2\right)}
\newcommand{\prob}[2]{\mathbb{P}_{#1}\left(#2\right)}
\newcommand{\setof}[1]{\{#1\}}
\newcommand{\LCS}{\mbox{\textsc{LCS}}}
\newcommand{\rflcs}{\mbox{\textsc{RFLCS}}}
\newcommand{\opt}{\ensuremath{\mathrm{Opt}}}
\def\eps{\epsilon}
\def\ifpdf\input{#.pdf_t}\else\input{#.pstex_t}\fi1{\ifpdf\input{#1.pdf_t}\else\input{#1.pstex_t}\fi}
\begin{document}

\title{Repetition-free longest common subsequence of random sequences} 



\author{Cristina G.~Fernandes
\thanks{Computer Science Department, Universidade de S\~ao Paulo, Brazil.
  Web: \texttt{www.ime.usp.br/$\sim$cris}. Partially supported by 
  CNPq 308523/2012-1, 477203/2012-4 and Proj.\ MaCLinC of NUMEC/USP.}
 \and 
  Marcos Kiwi 
\thanks{
  Depto.~Ing.~Matem\'{a}tica \&
  Ctr.~Modelamiento Matem\'atico UMI 2807, U.~Chile.
  Web: \texttt{www.dim.uchile.cl/$\sim$mkiwi}.
  Gratefully acknowledges the support of 
    Millennium Nucleus Information and Coordination in Networks ICM/FIC P10-024F
    and CONICYT via Basal in Applied Mathematics.}
}

\date{\today}

\maketitle

\begin{abstract}
A repetition free Longest Common Subsequence (LCS) of two sequences 
  $x$ and $y$ is an LCS of $x$ and~$y$ where each symbol may appear
  at most once.
Let $R$ denote the length of a repetition free 
  LCS of two sequences of $n$ symbols each one 
  chosen randomly, uniformly, and independently
  over a $k$-ary alphabet. 
We study the asymptotic, in $n$ and $k$, behavior of $R$ and 
  establish that there are three distinct regimes, depending 
  on the relative speed of growth of $n$ and $k$.
For each regime we establish the limiting behavior of $R$. 
In fact, we do more, since we actually establish tail bounds for large 
  deviations of $R$ from its limiting behavior. 

Our study is motivated by the so called exemplar model proposed by 
  Sankoff (1999) and the related similarity measure introduced by 
  Adi~et~al.~(2007).
A natural question that arises in this context, which as we show
  is related to long standing open problems in the area of probabilistic
  combinatorics, is to understand the asymptotic, in $n$ and $k$, 
  behavior of parameter $R$.
\end{abstract}

\section{Introduction}\label{intro}
Several of the genome similarity measures considered in the literature
either assume that the genomes do not contain gene duplicates, or work
efficiently only under this assumption. However, several known genomes
do contain a significant amount of duplicates. (See the review on
gene and genome duplication by Sankoff~\cite{Sankoff01} for specific
information and references.) 
One can find in the literature proposals to address this issue.
Some of these proposals suggest to filter the genomes, throwing away
part or all of the duplicates, and then applying the desired similarity
measure to the filtered genomes. (See~\cite{AngibaudFRTV09} for a
description of different similarity measures and filtering models for
addressing duplicates.)

Sankoff~\cite{Sankoff99}, trying to take into account gene duplication
in genome rearrangement, proposed the so called exemplar model, which
is one of the filtering schemes mentioned above. In this model, one
searches, for each family of duplicated genes, an exemplar
representative in each genome. Once the representative genes are
selected, the other genes are disregarded, and the part of the genomes
with only the representative genes is submitted to the similarity
measure. In this case, the filtered genomes do not contain duplicates,
therefore several of the similarity measures (efficiently) apply. Of
course, the selection of the exemplar representative of each gene
family might affect the result of the similarity measure. Following the
parsimony principle, one wishes to select the representatives in such
a way that the resulting similarity is as good as possible. Therefore,
each similarity measure induces an optimization problem: how to select
exemplar representatives of each gene family that result in the best
similarity according to that specific measure.

The length of a Longest Common Subsequence (\LCS) is a well-known measure of
similarity between sequences. In particular, in genomics, the length of an
\LCS\ is directly related to the so called edit distance between two sequences
when only insertions and deletions are allowed, but no substitution. This
similarity measure can be computed efficiently 
in the presence of
duplicates (the classical dynamic programming solution to the LCS problem
  takes quadratic time, however, improved algorithms are known, specially
  when additional complexity parameters are taken into account -- for a 
  comprehensive comparison of well-known algorithms for the LCS problem,
  see~\cite{BHR00}). 
Inspired by the exemplar model above, some variants of
the LCS similarity
  measure have been proposed in the literature. One of them, the so called
\emph{exemplar} \LCS~\cite{BonizzoniDVDFRV07}, uses the concept of mandatory and
optional symbols, and searches for an \LCS\ containing all mandatory
symbols. A second one is the so called \emph{repetition-free}
\LCS~\cite{AdiBFFMSSTW07}, that requires each symbol to appear at most once in
the subsequence. Some other extensions of these two measures were considered
under the name of \emph{constrained} \LCS\ and \emph{doubly-constrained}
\LCS~\cite{BonizzoniDVDP10}. All of these variants were shown to be hard to
compute~\cite{AdiBFFMSSTW07,BlinBDS12,BonizzoniDVDFRV07,BonizzoniDVDP10}, so some
heuristics and approximation algorithms for them were proposed and
experimentally tested~\cite{AdiBFFMSSTW07,BonizzoniDVDFRV07}.

Specifically, the notion of repetition-free \LCS\ was formalized by Adi et
al.~\cite{AdiBFFMSSTW07} as follows.  They consider finite sets, called
\emph{alphabets}, whose elements are referred to as \emph{symbols}, and then
they define the \rflcs\ problem as: Given two sequences $x$ and~$y$, find a
repetition-free \LCS\ of $x$ and $y$.  We write \rflcs$(x,y)$ to refer to the
\rflcs\ problem for a generic instance consisting of a pair~$(x,y)$, and we
denote by $\opt(\rflcs(x,y))$ the length of an optimal solution of
\rflcs$(x,y)$.
In their paper, Adi et al.~showed that \rflcs\ is MAX SNP-hard,
  proposed three approximation algorithms for \rflcs, and presented an
  experimental evaluation of their proposed algorithms, using 
  for the sake of comparison an exact (computationally expensive)
  algorithm for \rflcs\ based on an integer linear programming
  formulation of the problem. 

Whenever a problem such as the \rflcs\ is considered, a very 
  natural question arises: What is the expected value of 
  $\opt(\rflcs(x,y))$? (where expectation is taken over
  the appropriate distribution over the instances $(x,y)$
  one is interested in).
It is often the case that one has little knowledge of the distribution 
  of problem instances, except maybe for the size of the instances.
Thus, an even more basic and often relevant issue is 
  to determine the expected value taken by $\opt(\rflcs(x,y))$
  for uniformly distributed choices of~$x$ and~$y$ over all 
  strings of a given length over some fixed size alphabet 
  (say each sequence has $n$ symbols randomly, uniformly, and 
  independently chosen over a $k$-ary alphabet $\Sigma$).
Knowledge of such an average case behavior is a first step in
  the understanding of whether a specific value of 
  $\opt(\rflcs(x,y))$ is of relevance or could be simply explained 
  by random noise. 
The determination of this later average case behavior in the 
  asymptotic regime (when the length $n$ of the sequences $x$ and $y$ go
  to infinity) is the main problem  we undertake in this work.
Specifically, let $R_n=R_n(x,y)$ denote the length 
  of a repetition-free LCS of
  two sequences~$x$ and $y$ of~$n$ symbols randomly, uniformly, and
  independently chosen over a $k$-ary alphabet.  Note that the 
  random variable~$R_n$ is simply the
  value of $\opt(\rflcs(x,y))$.
We are interested in determining (approximately) the value of $\expec{}{R_n}$ 
  as a function of $n$ and $k$, for very large values of $n$.

Among the results established in this work, is that 
  the behavior of $\expec{}{R_n}$ depends on 
  the way in which $n$ and $k$ are related.
In fact, if $k$ is fixed, it is easy to see that $\expec{}{R_n}$ tends to
  $k$ when $n$ goes to infinity (simply because any fix permutation 
  of a $k$-ary alphabet will appear in a sufficiently large sequence
  of uniformly and independently chosen symbols from the alphabet).
Thus, the interesting cases arise when $k=k(n)$ tends to infinity with $n$.
However, the speed at which $k(n)$ goes to infinity is of crucial relevance 
  in the study of the behavior of $\expec{}{R_n}$.
This work identifies three distinct growth regimes depending on the 
  asymptotic dependency between $n$ and $k\sqrt{k}$.
Specifically, our work establishes the next result:
\begin{theorem}\label{th:main}
The following holds:
\begin{itemize}
\item If $n=o(k\sqrt{k})$, then 
  $\displaystyle\lim_{n\to\infty}\frac{\expec{}{R_n}}{n/\sqrt{k(n)}} = 2$.

\item If $n=\frac{1}{2}\rho k\sqrt{k}$ for $\rho> 0$, then 
  $\displaystyle\liminf_{n\to\infty}\frac{\expec{}{R_n}}{k(n)} \geq 1-e^{-\rho}$.
  (By definition $R_n\leq k(n)$.)

\item If $n=(\frac{1}{2}+\xi)k\sqrt{k}\ln k$ for some $\xi>0$, then
  $\displaystyle\lim_{n\to\infty}\frac{\expec{}{R_n}}{k(n)} = 1$.

\end{itemize}
\end{theorem}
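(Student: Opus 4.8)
My plan is to derive all three parts from one construction. First, split $x$ and $y$ into $m=\floor{n/s}$ consecutive blocks of a common length $s$, writing $x=X_1\cdots X_m$ and $y=Y_1\cdots Y_m$ and discarding a tail of length $<s$; inside each \emph{aligned} block pair $(X_r,Y_r)$ take a longest repetition-free common subsequence $C_r$. Since the blocks are aligned and ordered, $C_1C_2\cdots C_m$ is a common subsequence of $x$ and $y$, and greedily removing repeated symbols from it yields a repetition-free common subsequence of length $\#\{a\in\Sigma:\ a\ \text{occurs in some}\ C_r\}$; hence
\[
R_n\ \geq\ \#\bigl\{a\in\Sigma:\ a\ \text{occurs in}\ C_r\ \text{for some}\ r\bigr\}.
\]
I would take $s=c\sqrt{k}$ for a parameter $c=c(n)\to\infty$ growing slowly (say $c=o(k^{1/10})$; throughout I assume $n/\sqrt{k}\to\infty$, which is automatic in the last two regimes and, in the first, is the only nonvacuous case). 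Then $X_r$ has $(1+o(1))c\sqrt{k}$ symbols, nearly all distinct; the set $D_r$ of symbols common to $X_r$ and $Y_r$ satisfies $|D_r|=(1+o(1))c^2$ with high probability; and with probability $1-o(1)$ every symbol of $D_r$ occurs exactly once in $X_r$ and once in $Y_r$. On that event the two restricted words are independent uniformly random permutations of $D_r$, so $|C_r|$ is distributed as the length of a longest increasing subsequence of a uniform random permutation of $[\,|D_r|\,]$; the Logan--Shepp/Vershik--Kerov asymptotics then give $\expec{}{|C_r|}=(2+o(1))\sqrt{|D_r|}=(2+o(1))c$, and by the symmetry of $\Sigma$ also $\prob{}{a\in C_r}=\expec{}{|C_r|}/k=(2+o(1))c/k$ for every symbol $a$.

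\textbf{Regime $n=o(k\sqrt{k})$.} For the lower bound I would apply the construction with $m=(1+o(1))n/(c\sqrt{k})$. The expected number of symbols lying in $D_r\cap D_{r'}$ for some $r\neq r'$ is at most $k\binom{m}{2}(c^2/k)^2=O(c^2n^2/k^2)$, which is $o(n/\sqrt{k})$ once $c$ grows slowly enough --- and this is exactly where $n=o(k\sqrt{k})$ is used. So removing repeats costs only $o(n/\sqrt{k})$ symbols in expectation and
\[
\expec{}{R_n}\ \geq\ \sum_{r=1}^{m}\expec{}{|C_r|}-o\!\bigl(n/\sqrt{k}\bigr)\ =\ (2-o(1))\,mc\ =\ (2-o(1))\,\frac{n}{\sqrt{k}}.
\]
For the matching upper bound I would use $R_n\leq\LCS(x,y)$ together with the fact that the expected length of a longest common subsequence of two uniform $k$-ary length-$n$ words is $(2+o(1))n/\sqrt{k}$ in this regime --- so the repetition-free restriction is asymptotically free of charge here. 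These give $\expec{}{R_n}=(2+o(1))n/\sqrt{k}$.

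\textbf{Regimes $n=\tfrac12\rho k\sqrt{k}$ and $n=(\tfrac12+\xi)k\sqrt{k}\ln k$.} Here I would run the very same construction, except that now the reuse of symbols across blocks is the main phenomenon, not an error term. Since the blocks are disjoint, the events $\{a\in C_r\}$, $r=1,\dots,m$, are \emph{independent} and identically distributed, so
\[
\prob{}{a\ \text{occurs in no}\ C_r}\ =\ \bigl(1-\expec{}{|C_1|}/k\bigr)^{m}\ =\ \exp\!\Bigl(-(2+o(1))\tfrac{mc}{k}\Bigr)\ =\ \exp\!\Bigl(-(2+o(1))\tfrac{n}{k\sqrt{k}}\Bigr),
\]
using $mc=(1+o(1))n/\sqrt{k}$. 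For $n=\tfrac12\rho k\sqrt{k}$ this is $e^{-\rho(1+o(1))}$, so $\expec{}{R_n}\geq k(1-e^{-\rho(1+o(1))})$ and $\liminf_{n}\expec{}{R_n}/k\geq 1-e^{-\rho}$. Taking instead $\rho=\rho(k):=(1+2\xi)\ln k\to\infty$ --- which is precisely $n=(\tfrac12+\xi)k\sqrt{k}\ln k$ --- the same line yields $\prob{}{a\ \text{occurs in no}\ C_r}=k^{-(1+2\xi)+o(1)}$, hence $\expec{}{k-R_n}\leq k\cdot k^{-(1+2\xi)+o(1)}=k^{-2\xi+o(1)}\to0$; combined with the trivial $R_n\leq k$ this gives $\expec{}{R_n}/k\to1$.

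\textbf{Where the work is.} The technical heart is making the within-block reduction rigorous \emph{uniformly} as $c=c(n)\to\infty$: I would need to bound the contribution to $\expec{}{|C_r|}$ from the rare event that some symbol of $D_r$ repeats inside a block (a Cauchy--Schwarz estimate does it, but pins $c$ down to grow slowly), and then check that every $o(1)$ involved --- in the estimate $|D_r|=(1+o(1))c^2$, in the longest-increasing-subsequence asymptotics, in the expansion of $\ln(1-\expec{}{|C_1|}/k)$, and in letting $\rho\to\infty$ for the third regime --- survives this growth. The conceptually hardest point is the upper bound in the first regime: a plain first-moment count over repetition-free common subsequences yields only the constant $e$ (the same gap one meets for longest increasing subsequences), so obtaining the constant $2$ requires either importing the sharp expected-\LCS\ asymptotics for random words over large alphabets --- the point of contact with Ulam's problem and the Chv\'atal--Sankoff constants alluded to in the abstract --- or reproving the corresponding upper bound with optimal constant $2$ directly, via a decomposition into windows feeding into the sharp upper bound on the longest increasing subsequence of a random permutation.
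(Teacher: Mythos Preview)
Your proposal is correct and follows the same skeleton as the paper: partition into aligned blocks of length between $\sqrt{k}$ and $k$ (you take $c\sqrt{k}$ with $c\to\infty$ slowly, the paper takes $\widetilde{n}=k^{3/4}$), reduce the within-block problem to the LIS of a uniform random permutation via~\cite{KiwiLM05}, and then control the reuse of symbols across blocks. The execution differs in two respects. First, the paper proves exponential tail bounds in each regime (Theorems~\ref{theo:small-regime}--\ref{theo:large-regime}) and reads off Theorem~\ref{th:main} from those; you work directly with first moments, which suffices for the statement and is shorter. Second, for the overlap step the paper conditions on the block-wise sizes $R(G_i)=s_i$, observes (Lemma~\ref{lem:overlap}) that the symbol set of a canonical matching is then a uniformly random $s_i$-subset of $[k]$, reduces the number of distinct symbols to a grouped-urn occupancy, and couples this to the classical urn model (Lemma~\ref{lem:dominance}) to import tail bounds from Proposition~\ref{prop:regimes}. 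Your route is more direct: by alphabet symmetry $\prob{}{a\in C_r}=\expec{}{|C_r|}/k$, the events $\{a\in C_r\}$ are independent across $r$, and the miss probability is exactly $(1-\expec{}{|C_1|}/k)^{m}$. This is cleaner for expectations, while the paper's conditioning-plus-urn machinery is precisely what buys concentration. One point you leave implicit but should make explicit: the identity $\prob{}{a\in C_r}=\expec{}{|C_r|}/k$ requires a \emph{canonical} choice of $C_r$ that is equivariant under permutations of $\Sigma$; the paper secures this in Section~\ref{sec:distrib} by fixing an ordering on noncrossing matchings that depends only on positions, not symbols. Your upper bound in the first regime coincides with the paper's: $R_n\leq L_n$ together with Theorem~\ref{ThmL}.
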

In fact, we do much more than just proving the preceding result.
Indeed, for each of the three different regimes of Theorem~\ref{th:main} 
  we establish so called large deviation bounds which capture how
  unlikely it is for $R_n$ to deviate too much from its expected value.
We relate the asymptotic average case behavior of $\expec{}{R_n}$ 
  with that of the length $L_n=L_n(x,y)$ 
  of a Longest Common Subsequence (LCS) of two sequences $x$ and $y$
  of~$n$ symbols chosen randomly, uniformly, and independently
  over a $k$-ary alphabet. 
A simple (well-known) fact concerning~$L_n$ is that 
  $\expec{}{L_n}/n$ tends to a constant, say $\gamma_k$, when $n$ goes 
  to infinity.
The constant $\gamma_k$ is known as the Chv\'atal-Sankoff constant.
A long standing open problem is to determine the exact value of~$\gamma_k$ for 
  any fixed~${k\geq 2}$.
However, Kiwi, Loebl, and Matou\v{s}ek~\cite{KiwiLM05} proved that 
  $\gamma_k\sqrt{k}\rightarrow 2$ as $k \rightarrow \infty$
  (which positively settled a conjecture due to Sankoff and 
  Mainville~\cite{SankoffM83}). 
In the derivation of Theorem~\ref{th:main} we build 
  upon~\cite{KiwiLM05}, and draw connections 
  with another intensively studied problem concerning Longest 
  Increasing Subsequences (LIS) of randomly chosen permutations (also
  known as Ulam's problem).
Probably even more significant is the fact that our work partly elicits a 
  typical structure of one of the large repetition-free common subsequences
  of two length $n$ sequences randomly, uniformly, and 
  independently chosen over a $k$-ary alphabet.



Before concluding this introductory section, we
  discuss a byproduct of our work. 
To do so, we note that the computational experiments presented by Adi et
  al.~\cite{AdiBFFMSSTW07} considered problem instances 
  where sequences of~$n$ symbols where randomly,
  uniformly, and independently chosen over a $k$-ary alphabet.  
The experimental findings
  are consistent with our estimates of $\expec{}{R_n}$.
Our results thus have the added bonus,
  at least when~$n$ and $k$ are large, that they allow to perform 
  comparative studies, as the aforementioned one,
  but replacing the (expensive) exact computation of 
  $R_n$ by our estimated value.
Our work also suggests that 
  additional experimental evaluation of proposed heuristics,
  over test cases generated as in so
  called \emph{planted random models}, might help to further validate the
  usefulness of proposed algorithmic approaches.
Specifically, for the \rflcs\ problem, according to the
  planted random model, one way to generate test cases would
  be as described next. 
First, for some fixed $\ell\leq k$, 
  choose a repetition-free sequence~$z$ of length
  $\ell<n$ over a $k$-ary alphabet.
Next, generate a
  sequence~$x'$ of~$n$ symbols randomly, uniformly, and
  independently over the $k$-ary alphabet. 
Finally, uniformly at random choose a size $\ell$ collection 
  $s_1,\ldots,s_{\ell}\subseteq \{1,\ldots,n\}$ of 
  distinct positions of~$x'$ and replace the $s_i$th symbol
  of~$x'$ by the $i$th symbol of $z$, thus ``planting'' $z$ in $x'$.
Let $x$ be the length~$n$ sequence thus obtained.
Repeat the same procedure again for a second sequence $y'$ also
  of $n$ randomly chosen symbols but with 
  the same sequence $z$, and obtain a new sequence $y$. 
The resulting sequences~$x$ and~$y$ are such that 
  $\rflcs(x,y) \geq \ell$.
The parameter $\ell$ can be chosen to be larger than the 
  value our work predicts for $R_n$.
This allows to efficiently generate ``non typical'' problem 
  instances over which to try out the heuristics, as well 
  as a lower bound certificate for the problem optimum (although,
  not a matching upper bound).
For more details on the planted random model the interested reader is
  referred to the work of Bui, Chaudhuri, Leighton, and Sipser~\cite{BCLS87},
  where (to the best of our knowledge) the model first appeared, and to 
  follow up work by Boppana~\cite{Bop87}, 
  Jerrum and Sorkin~\cite{JS93},
  Condon and Karp~\cite{CK99}, 
  and the more recent work of Coja-Oghlan~\cite{Coj06}.
 
\medskip
Next, we formalize some aspects of our preceding discussion
  and rigorously state and derive our claims.
However, we first need to introduce terminology, 
  some background material, and establish some basic facts.
We start by describing the road-map followed throughout this manuscript.

\medskip\medskip\noindent
\textbf{Organization: }
This work is organized as follows. 
In Section~\ref{sec:urns}, we review some classical probabilistic 
  so called urn models and, for the sake of completeness,
  summarize some of their known basic properties, as well as 
  establish a few others.
As our results build upon those of Kiwi, Loebl, and 
  Matou\v{s}ek~\cite{KiwiLM05}, we review them in Section~\ref{sec:lcs}, 
  and also take the opportunity to introduce some relevant terminology. 
In Section~\ref{sec:distrib}, we formalize the notion of 
  ``canonical'' repetition-free LCS and show that conditioning on its size, 
  the distribution of the set of its symbols is uniform 
  (among all appropriate size subsets of symbols).
Although simple to establish, this result is key to our approach since it 
  allows us to relate the probabilistic analysis of the length of 
  repetition-free LCSs to one concerning urn models.
Finally, in Section~\ref{sec:lower}, we establish large deviation 
  type bounds from which Theorem~\ref{th:main} easily follows.

\section{Background on urn models}\label{sec:urns}
The probabilistic study of repetition-free LCSs we will undertake 
  will rely on the understanding of random phenomena 
  that arises in so called urn models. 
In these models, there is a collection of urns where 
  balls are randomly placed. 
Different ways of distributing the balls in the urns, as well
  as considerations about the (in)distinguishability of urns/balls,
  give rise to distinct models, often referred to in the literature 
  as occupancy problems  (for a classical treatment 
  see~\cite{feller}). 
In this section, we describe those urn models we will later encounter, 
  associate to them parameters of interest, and state some basic
  results concerning their probabilistic behavior.

Henceforth, let $k$ and $s$ be positive integers, and 
  $\vec{s}=(s_1,\ldots,s_{\blocks})$ denote 
  a $\blocks$-dimensional nonnegative integer vector whose
  coordinates sum up to $s$, i.e.~$\sum_{i=1}^{\blocks} s_i=s$.
For a positive integer $m$, we denote the set 
  $\setof{1,\ldots,m}$ by $[m]$.

Consider the following two processes where $s$ indistinguishable balls
  are randomly distributed among $k$ distinguishable urns.
\begin{itemize}
\item 
\textbf{Grouped Urn $(k,\vec{s})$-model:}
Randomly distribute $s$ balls over $k$ urns, placing a ball in urn 
  $j$ if $j\in S_i$, where  
  $S_{1},\ldots,S_{\blocks}\subseteq [k]$ are chosen randomly and
  independently so that 
  $S_{i}$ is uniformly distributed among all subsets of $[k]$ of size $s_{i}$.

\item
\textbf{Classical Urn $(k,s)$-model:}
Randomly distribute $s$ balls over $k$ urns, so that the urn 
  on which the $i$th ball, $i\in [k]$, is placed is uniformly chosen
  among the $k$ urns, and independently of where the other balls 
  are placed.\footnote{Note that this model is a particular case 
  of the Grouped Urn model where $\blocks=s$ and $s_1=\cdots=s_{\blocks}=1$.}
\end{itemize}
Henceforth, let $X^{(k,\vec{s})}$ be the number of empty urns left 
  when the Grouped Urn $(k,\vec{s})$-process ends.
Furthermore, let $X^{(k,\vec{s})}_{j}$ be the indicator of the event that the 
  $j$th urn ends up empty.
Obviously, $X^{(k,\vec{s})}=\sum_{j=1}^{k}X^{(k,\vec{s})}_{j}$.
Similarly, define $Y^{(k,s)}$ and $Y^{(k,s)}_1,\ldots,Y^{(k,s)}_{k}$ 
  but with respect to the Classical Urn $(k,s)$-process.
Intuitively, one expects that fewer urns will end up empty in the 
  Grouped Urn process in comparison with the Classical Urn process.
This intuition is formalized through the following result.
\begin{lemma}\label{lem:dominance}
Let $\vec{s}=(s_1,\ldots,s_{\blocks})\in\NN^{b}$ and 
  $s=\sum_{i=1}^{\blocks} s_{i}$.
Then, the random variable $X^{(k,\vec{s})}$ dominates 
  $Y^{(k,s)}$, i.e.~for every $t\geq 0$, 
\[
\prob{}{X^{(k,\vec{s})}\geq t} \ \leq \ \prob{}{Y^{(k,s)}\geq t}.
\]
\end{lemma}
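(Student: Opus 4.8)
The plan is to exhibit an explicit coupling of the two processes under which the set of non-empty urns produced by the Grouped $(k,\vec s)$-process always contains the set of non-empty urns produced by the Classical $(k,s)$-process. Once this is done, the number of empty urns satisfies $\widetilde X\le\widetilde Y$ pointwise on the coupling space, where $\widetilde X$ has the law of $X^{(k,\vec s)}$ and $\widetilde Y$ the law of $Y^{(k,s)}$, and hence $\prob{}{X^{(k,\vec s)}\ge t}=\prob{}{\widetilde X\ge t}\le\prob{}{\widetilde Y\ge t}=\prob{}{Y^{(k,s)}\ge t}$ for every $t\ge 0$, which is the assertion of the lemma.

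To build the coupling, first partition the $s$ balls of the Classical $(k,s)$-process into consecutive blocks $B_1,\dots,B_{\blocks}$ of sizes $s_1,\dots,s_{\blocks}$, and let $T_i\subseteq[k]$ be the (random) set of urns occupied by the balls of block $B_i$. Then $|T_i|\le s_i$, the sets $T_1,\dots,T_{\blocks}$ are mutually independent (each is a function of a disjoint group of i.i.d.\ uniform balls), and the set of non-empty urns of the Classical process is exactly $\bigcup_{i=1}^{\blocks}T_i$, so $Y^{(k,s)}=k-|\bigcup_{i} T_i|$. Now, independently over $i$ and independently of everything above, draw a uniformly random subset $F_i\subseteq[k]\setminus T_i$ of size $s_i-|T_i|$, and set $S_i\eqdef T_i\cup F_i$. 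By construction $|S_i|=s_i$ deterministically, $T_i\subseteq S_i$, and the $S_i$ are mutually independent.

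The only point requiring an argument is that each $S_i$ is uniformly distributed among the size-$s_i$ subsets of $[k]$, so that $(S_1,\dots,S_{\blocks})$ genuinely realizes the Grouped $(k,\vec s)$-process and $\widetilde X\eqdef k-|\bigcup_i S_i|$ has the law of $X^{(k,\vec s)}$. This follows from symmetry: the recipe producing $S_i$ from the i.i.d.\ uniform balls of block $B_i$ together with the fresh uniform set $F_i$ is invariant under relabelling the urns $[k]$, so the law of $S_i$ is invariant under the symmetric group acting on $[k]$; since $|S_i|=s_i$ always, such an exchangeable random subset is necessarily uniform over the $\binom{k}{s_i}$ subsets of that size. (Equivalently, $\prob{}{S_i=B}=\sum_{A\subseteq B}\prob{}{T_i=A}/\binom{k-|A|}{s_i-|A|}$ depends only on $|A|$ and not on $B$, since $\prob{}{T_i=A}$ does, so it equals $1/\binom{k}{s_i}$.) Given this, $\bigcup_i T_i\subseteq\bigcup_i S_i$ yields $\widetilde X\le\widetilde Y$ almost surely, completing the proof.

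I expect the only mildly delicate step to be this verification that the "filled-in" sets $S_i$ carry exactly the uniform marginal; the rest is bookkeeping. An alternative, essentially equivalent, route would be to pass from the Classical to the Grouped model through a sequence of merges — repeatedly replacing two size-one groups by one group of size two — and to check that each such merge can only decrease the number of empty urns in the stochastic order; but the block-wise coupling above seems the cleanest way to organize the argument.
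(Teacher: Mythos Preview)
Your proof is correct. The exchangeability argument showing that each filled-in set $S_i$ is uniform over the $\binom{k}{s_i}$ subsets is sound, the independence of the $S_i$'s across $i$ is clear since each depends only on its own block of balls together with fresh auxiliary randomness, and the pointwise inclusion $\bigcup_i T_i\subseteq\bigcup_i S_i$ immediately gives $\widetilde X\le\widetilde Y$ on the coupling space.

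Your route, however, differs from the paper's. The paper argues by induction on the ``granularity'' of $\vec{s}$: starting from the Grouped $(k,\vec s)$-model, it repeatedly splits off a single element from a block of size at least two, showing at each step that the number of empty urns can only stochastically increase, until reaching $\vec s=(1,\dots,1)$, which is the Classical model. Concretely, one picks a uniform ball from a block $S_j$ and, with suitable probabilities, either leaves it in place or moves it to another urn of $S_j$; this yields an independent uniform singleton together with a uniform $(s_j{-}1)$-subset, and can never un-empty an urn. Your approach instead builds the full coupling in one shot, block by block, by enlarging each $T_i$ to a uniform $s_i$-subset. The trade-off is that the paper's individual steps are almost trivial to verify (no urn becomes newly occupied) but require an inductive chain, whereas your argument is a single direct construction at the cost of the (mild) check that the filled-in $S_i$ has the right marginal. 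Interestingly, the ``sequence of merges'' alternative you sketch at the end is essentially the paper's argument run in reverse.
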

\begin{proof}
First observe that if $\vec{s}=(1,\ldots,1)\in\NN^{s}$, then
  $X^{(k,\vec{s})}$ and $Y^{(k,s)}$ have the same distribution, thence
  the claimed result trivially holds for such $\vec{s}$.
For $\vec{s}=(s_1,\ldots,s_{\blocks})\in\NN^{\blocks}$ with $\sum_{i=1}^{\blocks} s_i=s$ 
  and $s_j \geq 2$ for some~$j\in [\blocks]$, let 
  $\vec{s'}=(s'_1,\ldots,s'_{\blocks},1)\in\NN^{\blocks+1}$ be such that
\[
\vec{s'}=(s_1,\ldots,s_{j-1},s_{j}-1,s_{j+1},\ldots, s_{\blocks},1).
\]
Note that $\sum_{i=1}^{\blocks +1}s'_i=s$ and 
  observe that, to establish the claimed result, it will be enough
  to inductively show that, for every $t\geq 0$,
\begin{align}\label{eqn:dominate}
\prob{}{X^{(k,\vec{s})}\geq t} \ \leq \ \prob{}{X^{(k,\vec{s'})}\geq t}.
\end{align}
To prove this last inequality, consider the following experiment.
Randomly choose $S_1,\ldots,S_{\blocks}$
  as in the Grouped Urn $(k,\vec{s})$-model described above, and 
  distribute $s$ balls in $k$ urns as suggested in the model's description.
Recall that $X^{(k,\vec{s})}$ is the number of empty urns left when
  the process ends.
Now, randomly and uniformly choose one of the balls placed
  in an urn of index in $S_j$. With probability $\frac{k-(s_j-1)}k$, 
  leave it where it is and, with probability $\frac{s_j-1}k$, 
  move it to a distinct urn of index in $S_j$ chosen randomly and uniformly.
Observe that the number of empty urns cannot decrease.
Moreover, note that the experiment just described is equivalent 
  to the Grouped Urn $(k,\vec{s'})$-model, thence the number of 
  empty urns when the process ends is distributed according to 
  $X^{(k,\vec{s'})}$.
It follows that~\eqref{eqn:dominate} holds, thus concluding the proof
  of the claimed result. 
\end{proof}

We will later need upper bounds on
  the probability that a random variable distributed 
  as $X^{(k,\vec{s})}$ 
  is bounded away (from below) from its expectation, i.e.~on so called
  upper tail bounds for $X^{(k,\vec{s})}$.
The relevance of Lemma~\ref{lem:dominance} 
  is that it allows us to concentrate on the rather more
  manageable random variable $Y^{(k,s)}$, 
  since any upper bound on the probability that 
  $Y^{(k,s)}\geq t$ will also be valid for the probability that 
  $X^{(k,\vec{s})}\geq t$.
The behavior of $Y^{(k,s)}$ is a classical thoroughly studied subject.
In particular,
  there are well-known tail bounds that apply to it. 
A key fact used in the derivation of such tail bounds is that 
  $Y^{(k,s)}$ is the sum of the 
  negatively related $0$-$1$ random
  variables $Y^{(k,s)}_1, \ldots, Y^{(k,s)}_{k}$ 
  (for the definition of negatively related random variables
  see~\cite{janson94}, and the discussion in~\cite[Example~1]{janson94}).
For convenience of future 
  reference, the next result summarizes the tail bounds
  that we will use.
\begin{proposition}\label{prop:regimes}
For all positive integers $k$ and $s$,
\begin{align}\label{eqn:lambda}
\lambda \eqdef \expec{}{Y^{(k,s)}} = k\left(1-\frac{1}{k}\right)^{s}.
\end{align}
Moreover, the following hold:
\begin{enumerate}
\item 
\label{it:middle-regime}
If $p\eqdef \lambda/k$ and $q\eqdef 1-p$, then for all $a\geq 0$,
\[
\prob{}{Y^{(k,s)}\geq \lambda+a} 
  \leq \exp\left(-\frac{a^2}{2(kpq+a/3)}\right).
\]

\item 
\label{it:large-regime}
Let $\xi\geq 0$ and $s=(1+\xi)k\ln k$, then 
\[
\prob{}{Y^{(k,s)}\neq 0} \leq \frac{1}{k^{\xi}}.
\]

\item
\label{it:small-regime}
For all $a> 0$,
\[
\prob{}{k-Y^{(k,s)}\leq s-a} \leq \left(\frac{es^2}{ka}\right)^{a}.
\]
\end{enumerate}
\end{proposition}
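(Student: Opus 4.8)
The plan is to treat the expectation formula and the three tail bounds one at a time, in every case writing $Y^{(k,s)}=\sum_{j=1}^{k}Y^{(k,s)}_{j}$ as a sum of indicators. Equation~\eqref{eqn:lambda} is immediate from linearity of expectation: the $s$ balls are thrown independently and uniformly, so urn $j$ is avoided by all of them with probability $(1-1/k)^{s}$, whence $\expec{}{Y^{(k,s)}}=\sum_{j=1}^{k}\prob{}{Y^{(k,s)}_{j}=1}=k(1-1/k)^{s}$. Item~\ref{it:large-regime} is then a one-line first-moment (Markov) bound: since $Y^{(k,s)}\ge 1$ on the event $\{Y^{(k,s)}\neq 0\}$, we get $\prob{}{Y^{(k,s)}\neq 0}\le\expec{}{Y^{(k,s)}}=k(1-1/k)^{s}\le k\,e^{-s/k}$, and substituting $s=(1+\xi)k\ln k$ turns the right-hand side into $k\cdot k^{-(1+\xi)}=k^{-\xi}$.

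For item~\ref{it:middle-regime} I would invoke the fact recorded just before the statement (see~\cite{janson94}) that $Y^{(k,s)}_{1},\ldots,Y^{(k,s)}_{k}$ are negatively related; this yields $\expec{}{e^{tY^{(k,s)}}}\le\prod_{j=1}^{k}\expec{}{e^{tY^{(k,s)}_{j}}}$ for every $t\ge 0$, i.e.\ the moment generating function of $Y^{(k,s)}$ is dominated, on the positive axis, by that of a sum of $k$ independent $\mathrm{Bernoulli}(p)$ variables with $p=\lambda/k$. Feeding this domination into the usual Chernoff/Bernstein derivation for such a sum (total variance $kpq$, individual deviations bounded by $1$) gives exactly $\prob{}{Y^{(k,s)}\ge\lambda+a}\le\exp\bigl(-a^{2}/(2(kpq+a/3))\bigr)$.

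Item~\ref{it:small-regime} is a lower-tail estimate for the number $k-Y^{(k,s)}$ of occupied urns and needs a small combinatorial argument. Reveal the balls one at a time and call ball $i$ \emph{bad} if it lands in an urn already occupied by one of balls $1,\ldots,i-1$; then the number of occupied urns equals $s$ minus the number of bad balls, so $\{k-Y^{(k,s)}\le s-a\}$ is exactly the event that at least $a$ of the balls are bad. For a fixed $a$-element set $B\subseteq[s]$, processing the balls in increasing order and conditioning step by step, the conditional probability that a ball $i\in B$ is bad given the placement of balls $1,\ldots,i-1$ is at most $(i-1)/k\le s/k$ (at most $i-1$ urns have been occupied so far), so $\prob{}{\text{every ball of }B\text{ is bad}}\le(s/k)^{a}$. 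A union bound over the $\binom{s}{a}$ choices of $B$, together with $\binom{s}{a}\le(es/a)^{a}$, then gives $\prob{}{k-Y^{(k,s)}\le s-a}\le\binom{s}{a}(s/k)^{a}\le(es^{2}/(ka))^{a}$.

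The only step with any real subtlety is item~\ref{it:middle-regime}: one must check that negative relatedness is being used in the correct direction — it controls the upper tail of the sum, which is exactly what is asked — and that the Bernstein-type bound quoted is the sharp one with $kpq$ in the denominator rather than a looser variance proxy. The remaining items are essentially routine.
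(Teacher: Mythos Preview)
Your proposal is correct and matches the paper's own proof essentially line by line: the expectation via linearity, Part~\ref{it:large-regime} via a first-moment/union bound, Part~\ref{it:small-regime} via the ``bad ball'' union-bound argument, and Part~\ref{it:middle-regime} via negative relatedness of the indicators. The only cosmetic difference is that the paper simply cites~\cite{janson94} for Part~\ref{it:middle-regime}, whereas you sketch the Chernoff/Bernstein derivation that underlies that citation.
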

\begin{proof}
Since the probability that a ball uniformly distributed over
  $k$ urns lands in urn $j$ is $1/k$, the probability that
  none of $s$ balls lands in urn $j$ (equivalently,
  that $Y^{(k,s)}=1$) is exactly $(1-1/k)^{s}$.
By linearity of expectation, to establish~\eqref{eqn:lambda},
  it suffices to observe that
  $\expec{}{Y^{(k,s)}}=\sum_{j=1}^{k}\prob{}{Y^{(k,s)}_{j}=1}$.

Part~\ref{it:middle-regime} is just a re-statement of 
  the second bound in~(1.4) of~\cite{janson94} taking into account 
  the comments in~\cite[Example~1]{janson94}.

Part~\ref{it:large-regime} is a folklore result that follows 
  easily from an application of the union bound. 
For completeness, we sketch the proof.
Note that $Y^{(k,s)}\neq 0$ if and only if $Y_{j}^{(k,s)}\neq 0$
  for some $j\in [k]$.
Hence, by a union bound and since $1-x\leq e^{-x}$ for all $x$,
\[
\prob{}{Y^{(k,s)}\neq 0} \leq 
  \sum_{j\in [k]}\prob{}{Y_{j}^{(k,s)}\neq 0}
  = k\left(1-\frac{1}{k}\right)^{s}
  \leq ke^{-s/k}
  = \frac{1}{k^{\xi}}.
\]

Finally, let us establish Part~\ref{it:small-regime}.
Observe that $k-Y^{(k,s)}$ is the number of urns that end up nonempty
  in the Classical Urn $(k,s)$-model.
Thus, assuming that balls are sequentially thrown, one by one, 
  if $k-Y^{(k,s)}\leq s-a$, then there must be a 
  size $a$ subset $S\subseteq [s]$ of balls that fall in an urn where a 
  previously thrown ball has already landed.
The probability that a ball in $S$ ends up in a previously 
  occupied urn, is at most $s/k$ (given that at any moment
  at most $s$ of the $k$ urns are occupied).
So the probability that all balls in $S$ end up in 
  previously occupied urns is at most $(s/k)^{a}$.
Thus, by a union bound, some algebra, and the standard bound on 
  binomial coefficients $\binom{\mu}{\nu}\leq (e\mu/\nu)^{\nu}$,
\begin{align*}
\prob{}{k-Y^{(k,s)} \leq s-a}
  & 
  \leq \sum_{S\subseteq [s]: |S|=a} 
            \left(\frac{s}{k}\right)^{a}
    \leq \binom{s}{a}\left(\frac{s}{k}\right)^{a}
    \leq \left(\frac{es^{2}}{ka}\right)^{a}. \qedhere
\end{align*}
\end{proof}

\section{Some background on the expected length of an LCS}\label{sec:lcs}
In~\cite{KiwiLM05}, pairs of sequences $(x,y)$ are associated to 
  plane embeddings of bipartite graphs, and a common subsequence 
  of $x$ and $y$ to a special class of matching of the associated 
  bipartite graph.
Adopting this perspective will also be useful in this work.
In this section, 
  besides reviewing and restating some of the results of~\cite{KiwiLM05},
  we will introduce some of the terminology we shall adhere in
  what follows. 
  

The \emph{random word model $\Sigma(K_{r,s};k)$},\footnote{Remember 
  that~$K_{r,s}$ denotes the complete bipartite graph with two 
  bipartition classes, one of size $r$ and the other of size~$s$.}
  as introduced
  in~\cite{KiwiLM05}, consists of the
  following (for an illustration, see Figure~\ref{fig:model}): 
  the distribution over the set of subgraphs of $K_{r,s}$
  obtained by uniformly and independently assigning to each vertex of
  $K_{r,s}$ one of $k$ symbols and keeping those edges whose
  endpoints are associated to the same symbol.

\begin{figure}[h]
\begin{center}
\ifpdf\input{randomWordModel.pdf_t}\else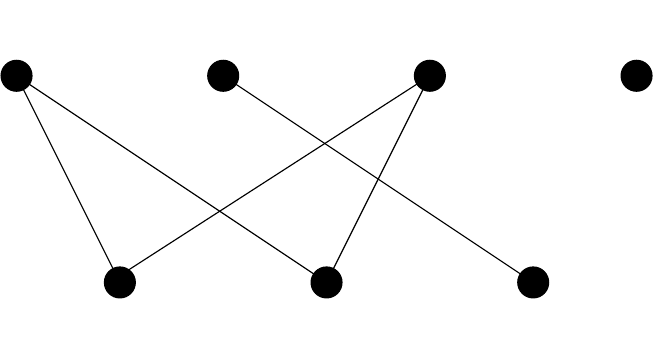\fi
\end{center}
\caption{Graph obtained from $\Sigma(K_{4,3};3)$ for the 
  choice of symbols associated (shown close to) each node.}\label{fig:model}
\end{figure}

Following~\cite{KiwiLM05}, two distinct edges $ab$ and $a'b'$ of $G$ are
said to be \emph{noncrossing} if $a$ and $a'$ are in the same order as
$b$ and $b'$. In other words, if $a<a'$ and $b<b'$, or $a'<a$ and
$b'<b$. A matching of $G$ is called \emph{noncrossing} if every distinct
pair of its edges is noncrossing.

Henceforth, for 
  a bipartite graph $G$, we denote by $L(G)$ the number of edges in
a maximum size (largest) noncrossing matching of $G$. 
To $G$  chosen according to $\Sigma(K_{n,n};k)$ we can associate two
  sequences of length~$n$, say $x(G)$ and $y(G)$,  
  one for each of the bipartition sides of~$G$, consisting
  of the symbols associated to the vertices of $K_{n,n}$. 
Note that $x(G)$ and $y(G)$ are uniformly and independently distributed
  sequences of~$n$ symbols over a~$k$-ary alphabet. 
Observe that, if $G$ is chosen according to $\Sigma(K_{n,n};k)$, then
  $L(G)$ is precisely the length of an LCS of its two associated
  sequences $x(G)$ and $y(G)$, and vice versa. 
Formally, $L(G)=L_{n}(x(G),y(G))$, where $L_n(\cdot,\cdot)$ 
  is as defined in the introductory section.

Among other things, in~\cite{KiwiLM05}, it is shown that 
  $L(\Sigma(K_{n,n};k))\sqrt{k}/n$ is approximately equal to $2$ 
  when $n$ and $k$ are very large, 
  provided that $n$ is ``sufficiently large'' compared to $k$.
This result is formalized in the following:
\begin{theorem}[Kiwi, Loebl, and Matou\v sek~\cite{KiwiLM05}]
\label{ThmL}
For every $\eps>0$, there exist $k_0$ and~$C$ such that, for all
$k>k_0$ and all $n$ with $n > C\sqrt{k}$, 
\begin{align}\label{th:prev-lower-bound}
(1-\eps)\cdot\frac{2n}{\sqrt{k}} &
  \ \leq \ \expec{}{L(\Sigma(K_{n,n};k))} 
  \ \leq \ (1+\eps)\cdot\frac{2n}{\sqrt{k}}.
\end{align}
Moreover, there is an exponentially small tail bound; namely, for
every $\eps>0$, there exists $c>0$ such that for $k$ and $n$ as above, 
$$ 
\prob{}{\left|L(\Sigma(K_{n,n};k))-\frac{2n}{\sqrt{k}}\right| 
  \geq \eps\frac{2n}{\sqrt{k}}}
   \ \leq \ e^{-cn/\sqrt{k}}. $$
\end{theorem}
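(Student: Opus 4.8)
The plan is to prove both parts of Theorem~\ref{ThmL} --- the first‑order estimate \eqref{th:prev-lower-bound} and the exponential tail bound --- by reducing the length of a longest noncrossing matching of $\Sigma(K_{n,n};k)$ to the length of a longest increasing chain of a nearly‑Poisson planar point set, that is, to Ulam's problem, and then invoking the known sharp asymptotics of the latter.

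First, the reduction. Fix $G\sim\Sigma(K_{n,n};k)$ with associated words $x=x(G)$ and $y=y(G)$, and set $H\eqdef\setof{(i,j)\in[n]\times[n]:x_i=y_j}$. Since a matching uses each vertex at most once, the noncrossing condition makes a noncrossing matching of $G$ exactly an increasing chain $(i_1,j_1),\dots,(i_\ell,j_\ell)$ of cells of $H$, i.e.\ one with $i_1<\dots<i_\ell$ and $j_1<\dots<j_\ell$; hence $L(G)$ equals the length of a longest increasing chain in $H$. Two cells of $[n]^2$ with distinct rows and distinct columns depend on disjoint vertex labels, so along any increasing chain the events ``$(i_t,j_t)\in H$'' are mutually independent, each of probability $1/k$. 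Thus, precisely on the configurations relevant to the longest‑chain functional, $H$ behaves like an independent $\mathrm{Bernoulli}(1/k)$ subset of $[n]^2$, equivalently --- after rescaling the grid to a square --- like a planar Poisson process of intensity $1/k$, carrying $(1+o(1))\,n^2/k$ points. The only correlations present in $H$ sit in ``rectangular'' patterns: a symbol occurring at left‑positions $I_a$ and right‑positions $J_a$ contributes the entire combinatorial box $I_a\times J_a$ to $H$.

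Second, the two bounds. Ulam's problem --- the longest increasing subsequence of a uniformly random permutation of $[N]$, equivalently the longest increasing chain among $N$ i.i.d.\ uniform points in a square --- is known, both in expectation and with exponentially small deviation probabilities, to have length $(1+o(1))\cdot 2\sqrt{N}$; the value of the constant is the theorem of Logan--Shepp and Vershik--Kerov, existence of the limit already follows from Hammersley's superadditivity argument, and the exponential tails come from Talagrand‑type concentration. Used with $N=(1\pm\eps)\,n^2/k$ this yields the target $2n/\sqrt{k}$. For the lower bound in \eqref{th:prev-lower-bound} I would couple $H$ with a slightly sparser independent field that it stochastically dominates, discard a thin boundary strip to kill edge effects, and read off an increasing chain of length $(1-\eps)\,2n/\sqrt{k}$. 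For the upper bound the naive first moment, which here is \emph{exactly} $\expec{}{\#\{\text{increasing chains of length }\ell\text{ in }H\}}=\binom{n}{\ell}^{2}k^{-\ell}$, drives $\prob{}{L(G)\ge\ell}$ to $0$ only for $\ell\ge(e+\delta)\,n/\sqrt{k}$ with $\delta>0$ fixed --- the wrong constant, since $e>2$ --- so one must instead transfer the sharp Ulam upper bound (via the RSK correspondence and the Logan--Shepp variational estimate, or a later streamlined proof) to the configuration underlying $H$.

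Third, concentration and the main obstacle. The random variable $L=L(\Sigma(K_{n,n};k))$ is a $1$‑Lipschitz function, in the Hamming metric, of the $2n$ i.i.d.\ vertex labels, and the event $\setof{L\ge\ell}$ is certified by only $\ell$ labels on each side; Talagrand's inequality for such certifiable Lipschitz functions then gives a bound of the form $\prob{}{|L-\mathrm{med}(L)|\ge t}\le c_1\exp\bigl(-c_2t^2/(\mathrm{med}(L)+t)\bigr)$, and with $\mathrm{med}(L)=\Theta(n/\sqrt{k})$ and $t=\eps\cdot 2n/\sqrt{k}$ this is $e^{-cn/\sqrt{k}}$; integrating the tail shows $|\expec{}{L}-\mathrm{med}(L)|=o(n/\sqrt{k})$, so the estimate also holds around the mean. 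The genuinely hard step is making the reduction rigorous: one must bound the discrepancy between $H$ and a true product or Poisson field, showing that the box decomposition $H=\bigcup_aI_a\times J_a$ neither manufactures nor destroys increasing chains beyond a $(1\pm\eps)$ factor. Here the relevant facts are that the occurrence positions of each symbol form a renewal process with $\mathrm{Geometric}(1/k)$ gaps, that a typical box has side $\approx n/k$, and that an optimal chain picks up only $O(1)$ cells per box, so the box scale $n/k$ is of lower order than the chain length $n/\sqrt{k}$ throughout the admissible range $n>C\sqrt{k}$; turning this heuristic into error bounds uniform in $n$ and $k$, and pushing the Ulam concentration estimates through the coupling, is where the real work lies --- the rest being standard occupancy and Poissonization bookkeeping.
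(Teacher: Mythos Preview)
This theorem is quoted from~\cite{KiwiLM05}; the present paper does not reprove it but sketches, in Section~\ref{sec:lcs}, the~\cite{KiwiLM05} argument for the lower bound because later results build on that machinery. That sketch differs from your plan in one essential respect, and the difference is precisely the device that handles what you correctly flag as ``the genuinely hard step''.

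Your plan is to couple the incidence set $H=\{(i,j):x_i=y_j\}$ globally to an independent Bernoulli or Poisson field and then invoke Ulam. As you note, the box structure $H=\bigcup_a I_a\times J_a$ obstructs any naive coupling, and you leave both the construction of the coupling and the transfer of the sharp Ulam upper bound through it as ``where the real work lies''. That is not a logical error so much as an unfulfilled promissory note: the proposal outlines a strategy but does not execute the step that makes it go, and it is not at all clear that a global coupling with error $o(n/\sqrt{k})$ exists uniformly over the range $n>C\sqrt{k}$.

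The~\cite{KiwiLM05} argument, as sketched in the paper, avoids a global coupling altogether. Both sides of $K_{n,n}$ are cut into $b=n/\widetilde{n}$ consecutive blocks of size $\widetilde{n}$, with $\widetilde{n}$ chosen so that $\sqrt{k}\ll\widetilde{n}\ll k$ (for instance $\widetilde{n}=k^{3/4}$). Inside a single block pair $G_i$ the expected vertex degree is $\widetilde{n}/k=o(1)$, so after discarding the $o(\widetilde{n}^2/k)$ edges incident to vertices of degree at least $2$ what remains, $G'_i$, is \emph{exactly} a matching; conditionally on its size $t\approx\widetilde{n}^2/k$ it is a uniformly random permutation of $[t]$. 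One then applies the LIS asymptotics of~\cite{BaikDJ99} directly, with no coupling error, to get $L(G'_i)\approx 2\widetilde{n}/\sqrt{k}$, and sums over the $b$ blocks using $\sum_i L(G'_i)\le L(G)$. In short, the block scale is chosen small enough that the box decomposition you worry about degenerates: with high probability each ``box'' inside a block is a single cell, and the dependence evaporates. This is exactly the missing idea that turns your heuristic into a proof, and it is what the paper relies on again (via Theorem~\ref{theo:small-main}) for the repetition-free analysis.

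Your Talagrand argument for concentration around the median is sound and standard; what it cannot supply by itself is the constant $2$ on the upper side, and your first-moment computation correctly shows why. The paper's overview covers only the lower bound explicitly, but the same block reduction to uniform permutations is what buys the sharp constant on both sides in~\cite{KiwiLM05}.
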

Observe now that a graph $G$ chosen according 
  to $\Sigma(K_{n,n};k)$ has symbols, from a $k$-ary alphabet,
  implicitly associated to each of its nodes (to the $j$th node
  of each side of $G$ the $j$th symbol of the corresponding sequence~$x(G)$ or $y(G)$).
Furthermore, the endpoints of an edge $e$ of $G$ must, by construction,
  be associated to the same symbol, henceforth referred to the symbol
  associated to $e$.
We say that a noncrossing matching of $G$ is \emph{repetition-free}
  if the symbols associated to its edges are all distinct, and we 
  denote by $R(G)$ the number of edges in a maximum size
  (largest) repetition-free noncrossing matching of $G$.
If $G$ is chosen according to $\Sigma(K_{n,n};k)$, then
  $R(G)$ is precisely the length of a repetition-free LCS of its two associated
  sequences $x(G)$ and $y(G)$, and vice versa. 
Formally, $R(G)=R_{n}(x(G),y(G))$, where again 
  $R_n(\cdot,\cdot)$ is as defined in the introductory section.
Summarizing, we have reformulated the repetition-free LCS problem
  as an equivalent one, but concerning repetition-free noncrossing matchings.
This justifies why, from now on, we will speak interchangeably about
  repetition-free LCSs and repetition-free noncrossing matchings.

Clearly, for every $G$ in the support of $\Sigma(K_{n,n};k)$
  we always have that $R(G)\leq L(G)$.
So, the upper bound in~\eqref{th:prev-lower-bound} for 
  $\expec{}{L(\Sigma(K_{n,n};k))}$ and the upper tail bound for 
  $L(\Sigma(K_{n,n};k))$ of Theorem~\ref{ThmL} are valid
  replacing $L(\Sigma(K_{n,n};k))$ by $R(\Sigma(K_{n,n};k))$.
This explains why from now on we concentrate exclusively in the 
  derivation of lower bounds such as those of Theorem~\ref{ThmL} but
  concerning $R(\cdot)$.

Our approach partly builds on~\cite{KiwiLM05}, so
  to help the reader follow the rest of this work, it will 
  be convenient to have a high level understanding of 
  the proofs of the lower bounds in Theorem~\ref{ThmL}.
We next provide such a general overview.
For precise statements and detailed proofs, 
  see~\cite{KiwiLM05}.

The proof of the lower bound in~\eqref{th:prev-lower-bound} has two parts,
  both of which consider a graph~$G$ 
  chosen according to~$\Sigma(K_{n,n};k)$ whose 
  sides, say~$A$ and~$B$, are
  partitioned into segments $A_1,A_2,\ldots$ and $B_1,B_2,\ldots$,
  respectively, of roughly the same appropriately chosen size 
  $\widetilde{n}=\widetilde{n}(k)$.
For each~$i$, one considers the subgraph of $G$ induced by~$A_i \cup B_i$,
  say $G_i$, and observes that 
  the union of noncrossing matchings, one for each~$G_i$, is a
  noncrossing matching of~$G$. 
The first part of the proof argument 
  is a lower bound on the expected length of a 
  largest noncrossing matching of~$G_i$. 
The other part of the proof
  is a lower bound on the expected length of a largest 
  noncrossing matching of $G$ which follows  
  simply by summing the lower bounds from the first part 
  and observing that, by ``sub-additivity'', $\sum_{i} L(G_i)\leq L(G)$.

Since the size of the segments $A_1,A_2,\ldots$ and 
  $B_1,B_2,\ldots$ is~$\widetilde{n}$, 
  there are~$n/\widetilde{n}$ such segments in~$A$ and in~$B$. 
An edge of $K_{n,n}$ is in~$G$ with probability~$1/k$. 
So the expected number of edges in~$G_i$ is $\widetilde{n}^2/k$. 
The value of~$\widetilde{n}$ is chosen so that, for
  each~$i$, the expected number of edges of $G_i$ is large, 
  and the expected degree of each vertex of $G_i$ is 
  much smaller than~$1$. 
Let $G'_i$ be the graph obtained from~$G_i$ by removing isolated vertices 
  and the edges 
  incident to vertices of degree greater than~$1$. 
By the choice of $\widetilde{n}$, almost all nonisolated vertices of~$G_i$
  have degree~$1$. 
So $G'_i$ has ``almost''~the same expected number of edges as $G_i$, 
  i.e.~$\widetilde{n}^2/k$ edges.  
Also, note that~$G'_i$ is just a perfect matching (every node has degree 
  exactly~$1$).
This perfect matching, of size say~$t$, defines a permutation of~$[t]$ ---
  in fact, by symmetry arguments it is easy to see that, conditioning 
  on $t$, the permutation is uniformly distributed 
  among all permutations of $[t]$.
Observe that a noncrossing matching of~$G'_i$ corresponds to an increasing 
  sequence in the aforementioned permutation, and vice versa. 
So a largest noncrossing matching of~$G'_i$ is given by a Longest
  Increasing Sequence (LIS) of the permutation.  
There are precise results (by Baik et al.~\cite{BaikDJ99}) on the 
  distribution of the length of a LIS of a randomly chosen permutation 
  of~$[t]$.  
The expected length of a LIS for such a random permutation is 
  $2\sqrt{t}$. 
So a largest noncrossing matching in~$G'_i$ has expected length 
  almost~$2\sqrt{\widetilde{n}^2/k} = 2\widetilde{n}/\sqrt{k}$.  
As the number of $i$'s is~$n/\widetilde{n}$, we obtain a
  lower bound of 
  almost~$(n/\widetilde{n})2\widetilde{n}/\sqrt{k} = 2n/\sqrt{k}$ for
  the expected length of a largest noncrossing matching of $G$.
The same reasoning (although technically significantly 
  more involved) yields a lower tail bound for the deviation of 
  $\sum_{i} L(G'_i)\leq \sum_{i}L(G_i)\leq L(G)$ from $2n/\sqrt{k}$.
This concludes our overview of the proof arguments of~\cite{KiwiLM05}
  for deriving the lower bounds of Theorem~\ref{ThmL}.

We now stress one important aspect of the preceding paragraph discussion.
Namely, that by construction $G'_i$ is a subgraph of~$G_i$ 
  whose vertices all have degree one,
and, moreover, $G'_i$ is in fact an induced subgraph of~$G$.
Since~$G$ is generated according to $\Sigma(K_{n,n};k)$, it must 
  necessarily be the case that the symbols associated to the edges of 
  $G'_i$ are \emph{all} distinct. 
Hence, a noncrossing matching of~$G'_i$ is also a repetition-free
  noncrossing matching of~$G'_i$, thence also of~$G_i$.
In other words, it holds that $L(G'_i)=R(G'_i)\leq R(G_i)$.
Thus, a lower tail bound for the deviation 
  of $L(G'_i)$ from $2\widetilde{n}/\sqrt{k}$ 
  is also a lower tail bound for the deviation of $R(G_i)$ from 
  $2\widetilde{n}/\sqrt{k}$.
Unfortunately, $R(\cdot)$ is not sub-additive as $L(\cdot)$ above
  (so now, $\sum_i R(G_i)$ is not necessarily a lower bound for $R(G)$).
Indeed, the union of 
  repetition-free noncrossing matchings $M_i$ of the $G_i$'s is certainly
  a noncrossing matching, but is not necessarily repetition-free.
This happens because although the symbols associated to the edges of each
  $M_i$ must be distinct, it might happen that the same symbol is associated
  to several edges of different $M_i$'s.
However, if we can estimate (bound) the number of ``symbol overlaps'' 
  between edges of distinct $M_i$'s, then we can potentially translate
  lower tail bounds for the deviation of $L(G'_i)=R(G'_i)$ from
  some given value, to lower tail bounds for the deviation
  of $R(G)$ from a properly chosen factor of the given value.
This is the approach we will develop in detail in the following sections.
However, we still need a lower tail bound for $R(G_i)$ when 
  $\widetilde{n}=\widetilde{n}(k)$
  is appropriately chosen in terms of $k$ and $G_i$ is  
  randomly chosen as above.
{From} the previous discussion, it should be clear that 
  such a tail bound is implicitly established in~\cite{KiwiLM05}.
The formal result of~\cite{KiwiLM05}
  related to $G_i$, addresses the distribution of
  $L(\Sigma(K_{r,s};k))$ for $r=s=\widetilde{n}$, 
  as expressed in their Proposition 6 in~\cite[p.~486]{KiwiLM05}. 
One can verify that the same result holds, with the same proof, 
  observing that each $G_i$ is distributed according 
  to $\Sigma(K_{\widetilde{n},\widetilde{n}};k)$ and 
  replacing $L(\cdot)$ by~$R(\cdot)$.
For the sake of future reference, 
  we re-state the claimed result but with respect to 
  the parameter $R(\cdot)$ and the case $r=s=\widetilde{n}$ 
  we are interested in.
\begin{theorem}\label{theo:small-main}
For every $\delta>0$, there exists $C=C(\delta)$ such that,
  if $\widetilde{n}$ is an integer and 
  $C\sqrt{k}\leq \widetilde{n}\leq \delta k/12$ then, 
  with $m_{u}=2(1+\delta)\widetilde{n}/\sqrt{k}$ and 
  $m_{l}=2(1-\delta)\widetilde{n}/\sqrt{k}$,
  for all $t\geq 0$,
\[
\prob{}{R(\Sigma(K_{\widetilde{n},\widetilde{n}};k)) \leq m_{l}-t}
  \leq 2\,e^{-t^{2}/8m_{u}}.
\]
\end{theorem}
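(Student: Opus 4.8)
As indicated in the discussion preceding the statement, this is precisely Proposition~6 of~\cite{KiwiLM05}, specialized to $r=s=\widetilde n$ and with $L(\cdot)$ replaced by $R(\cdot)$; the plan is to recall the structure of that argument and verify that the substitution is harmless. First I would fix $G$ distributed as $\Sigma(K_{\widetilde n,\widetilde n};k)$ and form $G'$ from $G$ by deleting every isolated vertex and every edge incident to a vertex of degree at least two in $G$. Then $G'$ is an \emph{induced} subgraph of $G$ in which every vertex has degree exactly one, i.e.\ a perfect matching; and it is repetition-free, since two edges of $G'$ sharing a symbol would force all four of their endpoints to carry that symbol and hence create a vertex of degree two inside $G'$. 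Consequently every noncrossing matching of $G'$ is automatically repetition-free, so $R(G)\ge R(G')=L(G')$ and it suffices to prove the stated lower-tail bound for $L(G')$.

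Next I would analyse $L(G')$ as in~\cite{KiwiLM05}. Let $T$ be the number of edges of $G'$. Reading off the matched pairs in increasing order of their $A$-endpoints produces, by a symmetry argument, a uniformly random permutation of $[T]$ once we condition on $T$ and on the two sets of used positions, and a noncrossing matching of $G'$ corresponds exactly to an increasing subsequence of that permutation; thus $L(G')$ has the law of the length of a longest increasing subsequence of a uniform permutation of $[T]$. Two quantitative inputs then close the argument. The first is a concentration estimate showing that $T$ rarely drops much below $\widetilde n^2/k$: here the hypotheses enter, since $\widetilde n\le\delta k/12$ keeps the probability that a present edge is spoiled by a collision below about $\delta/6$ (so $\expec{}{T}\ge(1-\delta/6-o(1))\widetilde n^2/k$), while $\widetilde n\ge C\sqrt k$ makes $\widetilde n^2/k\ge C^2$ large enough for the downward deviation to be exponentially unlikely. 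The second is the classical lower-tail bound $\prob{}{\mathrm{LIS}\le 2\sqrt t-u}\le e^{-\Omega(u^2/\sqrt t)}$ for a uniform permutation of $[t]$, which underlies the corresponding part of~\cite{KiwiLM05} (ultimately going back to~\cite{BaikDJ99}). Combining the two by a union bound over the value of $T$, and using $\sqrt{1-\delta}\ge 1-\delta$ so that $2\sqrt{(1-\delta)\widetilde n^2/k}\ge m_l$, produces a bound of the claimed form $2\,e^{-t^2/8m_u}$ provided $C=C(\delta)$ is chosen large enough.

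The only genuinely delicate point is the bookkeeping that fuses the two error terms into the single clean estimate with exactly the stated constants (the $8$ in the exponent and the leading factor $2$): one must track that the slack between $m_l$ and $2\sqrt{(1-\delta)\widetilde n^2/k}$, the bound $\sqrt T\le\widetilde n/\sqrt k\le m_u/2$, and the explicit constant in the longest-increasing-subsequence lower tail line up, and that the contribution of the rare event that $T$ is small is absorbed into the factor $2$. Since exactly this computation is carried out in~\cite{KiwiLM05} and the only modification is the replacement of $L(G'_i)$ there by $R(G')=L(G')$ established above (together with the specialization $r=s=\widetilde n$), I would complete the proof simply by appealing to that argument rather than redoing the estimates; I expect no further obstacle.
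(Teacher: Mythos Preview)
Your proposal is correct and follows the same approach as the paper: the paper does not give a self-contained proof of this theorem, but simply observes (in the paragraphs immediately preceding the statement) that $L(G')=R(G')\le R(G)$ because the edges of $G'$ necessarily carry distinct symbols, and then invokes Proposition~6 of~\cite{KiwiLM05} verbatim with $r=s=\widetilde n$. Your write-up reproduces precisely this reduction and, in addition, sketches the internal structure of the cited argument, which the paper itself omits; there is nothing missing or divergent.
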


\section{Distribution of symbols in repetition-free LCSs}\label{sec:distrib}
One expects that any symbol is equally likely to show
  up in a repetition-free LCS of two randomly chosen sequences.
Intuitively, this follows from the fact that there is a symmetry between
  symbols.
In fact, one expects something stronger to hold; conditioning on 
  the largest repetition-free LCS being of size~$\ell$, any subset
  of~$\ell$ symbols among the~$k$ symbols of the alphabet
  should be equally likely.
Making this intuition precise is somewhat tricky due to the fact 
  that there might be more than one repetition-free LCS for a given 
  pair of sequences.
The purpose of this section is to formalize the preceding discussion.

First, note that if $G$ is in the support of $\Sigma(K_{n,n};k)$, then each of 
  its connected components is either an isolated node or 
  a complete bipartite graph.
Hence, each connected component
  of~$G$ is in one-to-one correspondence with a symbol from the 
  $k$-ary alphabet.

Now, consider some total ordering, denoted $\preceq$,
  on the noncrossing matchings of $K_{n,n}$. 
For $\ell\in [k]$, let~$\calG_{\ell}$ be the collection of all graphs~$G$ 
  in the support of $\Sigma(K_{n,n};k)$ such that $R(G)=\ell$.
Given $G$ in $\calG_{\ell}$ let 
  $\calC_{\ell}(G)\subseteq [k]$ denote the collection
  of symbols assigned to the nodes of the smallest 
  (with respect to the ordering $\preceq$) 
  noncrossing matching $M$ of $G$ of size $\ell$.
Clearly, the cardinality of $\calC_{\ell}(G)$ is $\ell$.
For $G$ in the support of $\Sigma(K_{n,n};k)$, we say that 
  $M$ is the \emph{canonical} matching of $G$ if $M$ is the 
  smallest, with respect to the ordering $\preceq$, among all largest
  repetition free noncrossing matching of $G$. 
We claim that for $G$ chosen according to 
  $\Sigma(K_{n,n};k)$, conditioned on $R(G)=\ell$, the set of symbols 
  associated to the edges of the canonical matching $M$ of $G$
  is uniformly distributed over all size $\ell$ subsets 
  of $[k]$.
Formally, we establish the following result.
\begin{lemma}\label{lem:overlap}
For all $\ell\in [k]$ and $S\subseteq [k]$ with $|S|=\ell$, 
\[
\prob{}{\calC_{\ell}(G)=S \,\left|\, R(G)=\ell\right.}
  = \frac{1}{\binom{k}{\ell}},
\]
where the probability is taken over the choices of $G$ distributed
  according to $\Sigma(K_{n,n};k)$.
\end{lemma}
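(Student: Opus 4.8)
The plan is to exploit the complete symbol-symmetry of the model $\Sigma(K_{n,n};k)$ via a relabeling (permutation) argument. Fix a permutation $\pi$ of the alphabet $[k]$. Applying $\pi$ to the symbol assigned to every vertex of $K_{n,n}$ induces a bijection $G\mapsto \pi(G)$ on the support of $\Sigma(K_{n,n};k)$, and since symbols are assigned uniformly and independently, this bijection preserves the distribution of $\Sigma(K_{n,n};k)$. The first step is to observe that relabeling does not change which pairs of vertices receive equal symbols, hence $\pi(G)$ has exactly the same edge set, the same noncrossing matchings, and in particular the same set of largest repetition-free noncrossing matchings as $G$; in other words $R(\pi(G))=R(G)$, so $\pi$ maps $\calG_\ell$ bijectively onto itself.

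The second step is to handle the canonical matching carefully, since the ordering $\preceq$ on noncrossing matchings is purely combinatorial (it depends only on the positions of the edges, not on the symbols attached to them). Because $G$ and $\pi(G)$ have identical edge sets, the set of largest repetition-free noncrossing matchings of $G$ equals that of $\pi(G)$ as sets of edge-subsets, so the $\preceq$-smallest element $M$ is literally the same matching for both graphs. Its set of symbols in $G$ is $\calC_\ell(G)$, while its set of symbols in $\pi(G)$ is $\pi(\calC_\ell(G))$ — because applying $\pi$ to $G$ replaces the symbol on each edge of $M$ by its image under $\pi$. Therefore $\calC_\ell(\pi(G)) = \pi(\calC_\ell(G))$.

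Combining these two steps: for any $S\subseteq[k]$ with $|S|=\ell$ and any permutation $\pi$,
\[
\prob{}{\calC_\ell(G)=S,\ R(G)=\ell}
= \prob{}{\calC_\ell(\pi(G))=\pi(S),\ R(\pi(G))=\ell}
= \prob{}{\calC_\ell(G)=\pi(S),\ R(G)=\ell},
\]
where the last equality uses that $\pi(G)$ has the same distribution as $G$. Since the symmetric group acts transitively on the $\ell$-subsets of $[k]$, the joint probability $\prob{}{\calC_\ell(G)=S,\ R(G)=\ell}$ takes the same value for all such $S$. Summing over the $\binom{k}{\ell}$ choices of $S$ gives $\prob{}{R(G)=\ell}$, so each term equals $\prob{}{R(G)=\ell}/\binom{k}{\ell}$, and dividing by $\prob{}{R(G)=\ell}$ yields the claimed conditional probability $1/\binom{k}{\ell}$.

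The only delicate point — the step I would be most careful about — is the interaction between the canonical choice and the relabeling: one must check that $\preceq$ is defined on matchings as sets of vertex-pairs (positions) and is therefore invariant under relabeling of symbols, and that "the set of largest repetition-free noncrossing matchings" is genuinely the same object for $G$ and $\pi(G)$. Once that is granted, the identity $\calC_\ell(\pi(G))=\pi(\calC_\ell(G))$ is immediate and the transitivity argument finishes the proof with no computation.
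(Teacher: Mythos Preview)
Your proof is correct and rests on the same underlying idea as the paper's---the symbol symmetry of the model $\Sigma(K_{n,n};k)$---but you package it differently. The paper conditions on the edge set $E(G)$: once $E$ is fixed, the canonical matching $M$ (as a set of vertex-pairs) is determined, its edges lie in $\ell$ distinct connected components, and conditionally on $E$ any assignment of $\ell$ distinct symbols to those components is equally likely, giving $1/\binom{k}{\ell}$ directly within each fiber. You instead argue globally via the action of the symmetric group $S_k$ on labelings: the map $G\mapsto\pi(G)$ preserves the law of $G$, fixes the edge set and hence the canonical matching, and satisfies $\calC_\ell(\pi(G))=\pi(\calC_\ell(G))$, so transitivity on $\ell$-subsets forces uniformity. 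Both arguments hinge on exactly the point you flag as delicate---that $\preceq$ and ``repetition-free'' depend only on edge positions (equivalently, on the component structure), not on which symbols label the components---and both dispatch it the same way. Your group-action formulation is arguably cleaner, avoiding the explicit partition $\{\calP_\ell(E)\}$; the paper's conditioning is slightly more concrete. Neither gains anything substantive over the other.
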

\begin{proof}
For a subset $E$ of edges of $K_{n,n}$, define $\calP_{\ell}(E)$
  as the set of elements of $\calG_{\ell}$ whose 
  edge set is exactly~$E$.
Let $\calE_{\ell}$ be the collection of all $E$'s such 
  that $\calP_{\ell}(E)$ is nonempty and
  let $\calP_{\ell}$ be the collection of $\calP_{\ell}(E)$'s
  where~$E$ ranges over subsets of $\calE_{\ell}$.
Observe that $\calP_{\ell}$ is a partition of $\calG_{\ell}$.
Hence, 
\[
\sum_{E\in\calE_{\ell}}\prob{}{E(G)=E \,\left|\, R(G)=\ell\right.}
  = \sum_{E\in\calE_{\ell}}\prob{}{G\in\calP_{\ell}(E) \,\left|\, R(G)=\ell\right.}
  = \prob{}{G\in\calG_{\ell} \,\left|\, R(G)=\ell\right.} = 1.
\]
Moreover,
\[
\prob{}{\calC_{\ell}(G)=S \,\left|\, R(G)=\ell\right.}
  = \sum_{E\in\calE_{\ell}}
    \prob{}{\calC_{\ell}(G)=S \,\left|\, E(G)=E\right.}
    \prob{}{E(G)=E \,\left|\, R(G)=\ell\right.}.
\]
Thus, the desired conclusion will follow immediately once we 
  show that $\prob{}{\calC_{\ell}(G)=S \,\left|\, E(G)=E\right.}
    = 1/\binom{k}{\ell}$ for all $E\in\calE_{\ell}$.
Indeed, let $E\in\calE_{\ell}$ and observe that the condition
  $E(G)=E$ uniquely determines 
  the canonical
  noncrossing matching of $G$ of size $\ell$, say $M=M(G)$.
Moreover, note that any choice of distinct 
  $\ell$ symbols to each of the $\ell$ distinct components of $G$
  to which the edges of $M$ belong is equally likely.
Since there are $\binom{k}{\ell}$ possible choices of $\ell$-symbol
  subsets of $[k]$, the desired conclusion follows.
\end{proof}
The preceding result will be useful in the next section in order to 
  address the following issue.
For $G$ and $G_1,\ldots,G_{\blocks}$ as defined in Section~\ref{sec:lcs},
  suppose that $M_1,\ldots,M_{\blocks}$ are the largest repetition-free
  noncrossing matchings of $G_1,\ldots,G_{\blocks}$, respectively.
As mentioned before
  the union $M$ of the $M_i$'s is a noncrossing matching of $G$, 
  but not necessarily repetition-free.
Obviously, we can remove edges from $M$, keeping one edge for each
  symbol associated to the edges of $M$, and thus obtain a 
  repetition-free noncrossing matching~$M'$ contained in $M$, and thence
  also in $G$.
Clearly, it is of interest to determine the expected number of 
  edges that are removed from $M$ to obtain $M'$, i.e.~$|M\setminus M'|$,
  and in particular whether this number is small.
Lemma~\ref{lem:overlap} is motivated, and will be useful,
  in this context.
The reason being that, conditioning on the size~$s_i$ of the 
  largest repetition-free noncrossing matching in each $G_i$, 
  it specifies the distribution of the set of symbols $\calC_{s_i}(G_i)$
  associated to the edges of the canonical noncrossing matching of $G_i$.
The latter helps in the determination of the sought-after expected value, since
\[
|M\setminus M'| = \sum_{i=1}^{\blocks}\left|\calC_{s_i}(G_i)\right|
  - \left|\cup_{i=1}^{\blocks}\calC_{s_i}(G_i)\right|.
\]

\section{Tail bounds}\label{sec:lower}
In this section we derive bounds on the probability that 
  $R(G)$ is bounded away from its expected value when~$G$ is chosen
  according to $\Sigma(K_{n,n};k)$.
We will ignore the case where $n=O(\sqrt{k})$ 
  due to its limited interest and the impossibility of 
  deriving meaningful asymptotic results. 
Indeed, if $n\leq C\sqrt{k}$ for some positive constant $C$ and 
  sufficiently large $k$, then the expected number of edges 
  of a graph $G$ chosen according to $\Sigma(K_{n,n};k)$
  is $n^{2}/k\leq C^{2}$ (just observe that there 
  are $n^{2}$ potential edges and that each one occurs 
  in $G$ with probability $1/k$).
Since $0\leq R(G)\leq |E(G)|$, when $n=O(\sqrt{k})$, the 
  expected length of a 
  repetition-free LCS will be constant ---
  hence, not well suited for an asymptotic study.
Thus, we henceforth assume that $n=\omega(\sqrt{k})$.
If in addition $n=o(k)$, then Theorem~\ref{theo:small-main}
  already provides the type of tail bounds we 
  are looking for.
Hence, we need only consider the case where $n=\Omega(k)$.
We will show that three different regimes arise.
The first one corresponds to $n=o(k\sqrt{k})$.
For this case we show that the length of a repetition-free LCS 
  is concentrated around its expected value, which in 
  fact is roughly $2n/\sqrt{k}$ (i.e.~the same magnitude as that of
  the length of a standard LCS).
The second one corresponds to $n=\Theta(k\sqrt{k})$.
For this regime we show that the length of a repetition-free 
  LCS cannot be much smaller than a fraction of $k$, 
  and we relate the constant of proportionality with 
  the constant hidden in the asymptotic dependency
  $n=\Theta(k\sqrt{k})$.
The last regime corresponds to $n=(1+\Omega(1))k\sqrt{k}\ln k$.
For this latter case we show that with high probability a
  repetition-free LCS is of size $k$. 

\medskip
Throughout this section, $n$ and $k$ are positive integers, 
  $G$ is a bipartite graph chosen according
  to $\Sigma(K_{n,n};k)$, and $G_1,\ldots,G_{\blocks}$ are as defined
  in~Section~\ref{sec:lcs}, where $b$ is an integer approximately 
  equal to $n/\widetilde{n}$.
Note in particular that $G_i$ is distributed according 
  to $\Sigma(K_{\widetilde{n},\widetilde{n}};k)$. 

\medskip
This section's first formal claim
  is motivated by an obvious fact; if $r=R(G)$ is 
  ``relatively small'', then at least one of the two following situations must
  happen:
\begin{itemize}
\item For some $i\in [\blocks]$, the value of $r_i=R(G_i)$ is 
  ``relatively small''.
\item The sets of symbols, $\calC_{r_i}(G_i)$, associated to the edges of 
  the canonical largest noncrossing matching of $G_i$, for $i\in [\blocks]$, 
  have a ``relatively large'' overlap, more precisely, the cardinality of 
  $\calC_{r}(G)$ is ``relatively small'' compared to 
  the sum, for $i\in [\blocks]$, of the cardinalities of $\calC_{r_i}(G_i)$.
  
\end{itemize}
The next result formalizes the preceding observation.
In particular, it establishes that the probability that $R(G)$ is 
  ``relatively small'' is bounded by the probability that 
  one of the two aforementioned cases occurs (and also gives a 
  precise interpretation to the terms ``relatively large/small'').
\begin{lemma}\label{lem:fact}
Let $b$ be a positive integer.
For $a\geq 0$ and $r\geq t\geq 0$, let 
\begin{align*}
P_1 & = P_1(r,t) \eqdef \!
    \sum_{r_1,\ldots,r_{\blocks}\geq 0\atop r_1+\cdots+r_{\blocks}=\lfloor r-t\rfloor}
    \!\! \prob{}{R(G_{i})\leq r_i, \forall i\in [\blocks]}\,, \tag{Definition of $P_1$}\\
P_2 &  = P_2(a,r,t) \eqdef
    \prob{}{R(G)\leq r-a, \,
    \sum_{i=1}^{\blocks}R(G_i)\geq r-t}. \tag{Definition of $P_2$}
\end{align*}
Then,
\begin{align*}
\prob{}{R(G)\leq r-a} & \leq P_1 + P_2.
\end{align*}
\end{lemma}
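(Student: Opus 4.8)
The plan is to decompose the event $\{R(G)\leq r-a\}$ according to whether the sum $\sum_{i=1}^{\blocks} R(G_i)$ is at least $r-t$ or strictly below that value. Writing
\[
\prob{}{R(G)\leq r-a}
  = \prob{}{R(G)\leq r-a,\ \sum_{i=1}^{\blocks}R(G_i)\geq r-t}
    + \prob{}{R(G)\leq r-a,\ \sum_{i=1}^{\blocks}R(G_i)< r-t},
\]
the first term on the right is, by definition, exactly $P_2=P_2(a,r,t)$. Hence the whole task reduces to bounding the second term by $P_1=P_1(r,t)$, and since that term is at most $\prob{}{\sum_{i=1}^{\blocks}R(G_i)< r-t}$, it is enough to show $\prob{}{\sum_{i=1}^{\blocks}R(G_i)< r-t}\leq P_1$.

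For this step I would argue as follows. Since $\sum_{i=1}^{\blocks}R(G_i)$ takes nonnegative integer values and $r\geq t$, the event $\{\sum_{i}R(G_i)< r-t\}$ is contained in the event $\{\sum_{i}R(G_i)\leq m\}$, where $m\eqdef\lfloor r-t\rfloor\geq 0$. I then claim that
\[
\Big\{\sum_{i=1}^{\blocks}R(G_i)\leq m\Big\}
  \ \subseteq\ \bigcup_{r_1,\ldots,r_{\blocks}\geq 0\atop r_1+\cdots+r_{\blocks}=m}
    \{R(G_i)\leq r_i\ \text{for all } i\in[\blocks]\}\,;
\]
indeed, on the left-hand event we have $m'\eqdef\sum_{i}R(G_i)\leq m$, and we may choose nonnegative integers $r_i\geq R(G_i)$ that sum to exactly $m$ (distributing the slack $m-m'\geq 0$ arbitrarily among the coordinates), which places the outcome in the corresponding set on the right. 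A union bound over the finitely many admissible tuples then gives $\prob{}{\sum_{i}R(G_i)\leq m}\leq P_1$, whence $\prob{}{\sum_{i}R(G_i)< r-t}\leq P_1$. Combining this with the decomposition above yields $\prob{}{R(G)\leq r-a}\leq P_1+P_2$, as claimed.

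I expect no genuine difficulty here: the argument is just a case split followed by a union bound. The one point deserving a moment's care is the covering claim, where the inequality $\sum_{i}R(G_i)\leq m$ has to be converted into the equality constraint $\sum_i r_i=m$ occurring in the definition of $P_1$ by inflating the $R(G_i)$'s; one should also keep in mind that the events $\{R(G_i)\leq r_i\ \forall i\}$ over distinct tuples are not disjoint, which is harmless since only an upper bound (via the union bound) is needed.
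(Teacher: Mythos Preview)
Your proof is correct and follows essentially the same route as the paper: split the event $\{R(G)\leq r-a\}$ according to whether $\sum_i R(G_i)\geq r-t$ (giving $P_2$) or not, and bound the remaining term by $P_1$. In fact your treatment is slightly more careful than the paper's, which simply asserts that $\prob{}{\sum_i R(G_i)<r-t}$ ``equals'' $P_1$; you spell out the covering argument and union bound that justify the inequality $\prob{}{\sum_i R(G_i)\leq \lfloor r-t\rfloor}\leq P_1$.
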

\begin{proof}
Just note that
\begin{align*}
\prob{}{R(G)\leq r-a} & = 
    \prob{}{R(G)\leq r-a, 
    \sum_{i=1}^{\blocks}R(G_i)< \lceil r-t\rceil} 
   + 
    \prob{}{R(G)\leq r-a, \,
    \sum_{i=1}^{\blocks}R(G_i)\geq \lceil r-t\rceil} \\
  & \leq 
    \prob{}{\sum_{i=1}^{\blocks}R(G_i)< r-t} +
    \prob{}{R(G)\leq r-a, \,
    \sum_{i=1}^{\blocks}R(G_i)\geq r-t}.
\end{align*}
The desired conclusion follows observing that 
  the two last terms in the preceding displayed expression
  are equal to $P_1$ and $P_2$, respectively.
\end{proof}

The following lemma will be useful in bounding the terms in $P_1$, 
  i.e.~the probability that $R(G_i)$ is ``relatively small'' for some $i$.
Henceforth, for the sake of clarity of exposition, we will ignore
  the issue of integrality of quantities (since we are interested in the 
  case where $n$ is large, ignoring integrality issues should have 
  a negligible and vanishing impact in the following calculations).
\begin{lemma}\label{lem:boundP1}
Let $\delta>0$ and $\widetilde{n}=\widetilde{n}(k)$ be such that it 
  satisfies the hypothesis of Theorem~\ref{theo:small-main}
  and let $m_l=(1-\delta)2\widetilde{n}/\sqrt{k}$.
Let $\blocks=\blocks(k)\eqdef n/\widetilde{n}$.  
Then, 
\[
P_{1} = P_{1}(bm_{l},t) \leq \left(2e(m_{l}+1)\right)^{\blocks}
  \exp\left(-\frac{t^{2}}{16(1+\delta)n/\sqrt{k}}\right).
\]
\end{lemma}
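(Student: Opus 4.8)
The plan is to bound $P_1(bm_l,t)$ by splitting the constraint $r_1+\cdots+r_b=\lfloor bm_l-t\rfloor$ into the "deficit" contributed by each block. Write $m_l = (1-\delta)2\widetilde n/\sqrt k$, and recall from Theorem~\ref{theo:small-main} that for each $i$, since $G_i\sim\Sigma(K_{\widetilde n,\widetilde n};k)$, we have $\prob{}{R(G_i)\le m_l - u} \le 2e^{-u^2/8m_u}$ for $u\ge 0$, where $m_u=(1+\delta)2\widetilde n/\sqrt k$. First I would reindex: set $u_i = m_l - r_i$, so that $\sum_i u_i = bm_l - \lfloor bm_l-t\rfloor \ge t$ (ignoring integrality as the paper allows). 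Since the events $\{R(G_i)\le r_i\}$ concern the disjoint vertex sets $A_i\cup B_i$, the graphs $G_1,\dots,G_b$ are independent, so $\prob{}{R(G_i)\le r_i\ \forall i} = \prod_i \prob{}{R(G_i)\le m_l-u_i}$, and for those $i$ with $u_i\ge 0$ this is at most $\prod_i 2e^{-u_i^2/8m_u}$ (terms with $u_i<0$ are bounded trivially by $1$, which only helps).

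The next step is to handle the sum over compositions. There are at most $(m_l+1)^{b}$ choices for the tuple $(r_1,\dots,r_b)$ with each $r_i\in\{0,1,\dots,\lfloor m_l\rfloor\}$ — actually the number of compositions of $\lfloor bm_l-t\rfloor$ into $b$ parts each at most $m_l$ is crudely at most $(m_l+1)^{b}$ — so
\[
P_1 \le (m_l+1)^{b}\, 2^{b}\, \max \exp\Bigl(-\frac{1}{8m_u}\sum_i u_i^2\Bigr),
\]
where the max is over nonnegative $u_i$ with $\sum u_i \ge t$. By convexity (or just Cauchy--Schwarz), $\sum_i u_i^2 \ge (\sum_i u_i)^2/b \ge t^2/b$. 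Substituting $b = n/\widetilde n$ and $m_u = (1+\delta)2\widetilde n/\sqrt k$ gives $8 m_u b = 16(1+\delta)n/\sqrt k$, so the exponential factor is at most $\exp\bigl(-t^2/(16(1+\delta)n/\sqrt k)\bigr)$, and collecting the prefactors $(2(m_l+1))^{b}$ with an extra $e^b$ slack (to absorb integrality rounding and the crude composition count) yields exactly the claimed bound $\bigl(2e(m_l+1)\bigr)^{b}\exp\bigl(-t^2/(16(1+\delta)n/\sqrt k)\bigr)$.

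The main obstacle — really the only delicate point — is making the "deficit" bookkeeping rigorous: one must check that whenever $\sum r_i = \lfloor bm_l - t\rfloor$, the total deficit $\sum_i (m_l - r_i)$ is at least $t$ (true up to the floor, which is why the paper prefaces this with its blanket remark about ignoring integrality), and that the contribution of blocks with $r_i > m_l$ — i.e. negative $u_i$ — is correctly discarded by bounding their probability by $1$ rather than by the Gaussian tail, which is invalid there. Everything else is a routine combination of independence across blocks, the single-block tail bound of Theorem~\ref{theo:small-main}, the crude count $(m_l+1)^b$ of admissible tuples, and the convexity inequality $\sum u_i^2\ge t^2/b$; no new probabilistic input beyond Theorem~\ref{theo:small-main} is needed.
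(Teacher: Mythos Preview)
Your approach mirrors the paper's at every step: independence of the $G_i$'s, the single-block tail bound from Theorem~\ref{theo:small-main}, Cauchy--Schwarz on the deficits $u_i=m_l-r_i$ to get $\sum u_i^2\ge t^2/b$, and a count of the admissible tuples. The only substantive difference --- and the gap --- is in that count. You bound the number of tuples by $(m_l+1)^b$, arguing that each $r_i$ lies in $\{0,\ldots,\lfloor m_l\rfloor\}$. But the sum defining $P_1$ imposes no upper bound on the individual $r_i$; a composition of $\lfloor bm_l-t\rfloor$ into $b$ nonnegative parts may well have some part exceeding $m_l$ (indeed, you acknowledge this yourself when you bound those blocks' probabilities by~$1$). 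The total number of such compositions is $\binom{\lfloor bm_l-t\rfloor+b-1}{b-1}$, and this can genuinely exceed $(m_l+1)^b$: already $b=3$, $m_l=1$, $t=0$ gives $\binom{5}{2}=10>8=2^3$. In the regimes the paper actually uses (e.g.\ $\widetilde n=k^{3/4}$ with $n=\omega(k)$, so $b\gg m_l$), the discrepancy is exponential in~$b$.

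The paper's count is the stars-and-bars formula combined with the standard estimate $\binom{\mu}{\nu}\le(e\mu/\nu)^{\nu}$:
\[
\binom{\lfloor bm_l-t\rfloor+b}{b}\ \le\ \left(\frac{e(bm_l+b)}{b}\right)^{b}\ =\ \bigl(e(m_l+1)\bigr)^{b}.
\]
So the factor $e^{b}$ is not ``slack to absorb integrality'' as you suggest --- it is an intrinsic part of the correct composition count. Once you replace your $(m_l+1)^b$ by this stars-and-bars bound, your argument becomes exactly the paper's.
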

\begin{proof}
Observe that, by independence of the $R(G_i)$'s
  and Theorem~\ref{theo:small-main}, for 
  $m_{u}=(1+\delta)2\widetilde{n}/\sqrt{k}$,
\begin{align*}
& \prob{}{R(G_{i})\leq r_i, \forall i\in [\blocks]} 
   = \prob{}{R(G_{i})\leq m_{l}-(m_{l}-r_i), \forall i\in [\blocks]} \\
& \qquad 
  \leq \prod_{i=1}^{\blocks} \left(2e^{-\max\{0,m_{l}-r_{i}\}^{2}/8m_{u}}\right)
  = 2^{\blocks}e^{-\sum_{i=1}^{\blocks}\max\{0,m_{l}-r_{i}\}^{2}/8m_{u}}.
\end{align*}
By Cauchy-Schwarz, since $\max\{0,x\}+\max\{0,y\}\geq \max\{0,x+y\}$,
  and assuming that $\sum_{i=1}^{\blocks}r_{i}=\lfloor\blocks m_l-t\rfloor$, 
\[
\sum_{i=1}^{\blocks}\max\{0,m_{l}-r_{i}\}^{2}
  \geq \frac{1}{\blocks}\left(\sum_{i=1}^{\blocks}\max\{0,m_{l}-r_{i}\}\right)^{2}
  \geq
 \frac{1}{\blocks}t^{2}.
\]
Recalling that there are $\binom{M+\blocks-1}{\blocks-1}\leq 
  \binom{M+\blocks}{\blocks}$
  ways in which $\blocks$ nonnegative summands can add up to $M\in\NN$, 
  and that $\binom{\mu}{\nu}\leq (e\mu/\nu)^{\nu}$, 
\begin{align*}
P_{1} & \leq  
  \sum_{r_1,\ldots,r_{\blocks}\geq 0\atop r_1+\cdots+r_{\blocks}=\lfloor\blocks m_{l}-t\rfloor}
  \!\! 2^{\blocks}e^{-t^{2}/8\blocks m_{u}} 
  \leq {\lfloor\blocks m_{l}-t\rfloor+\blocks\choose \blocks}2^{\blocks}e^{-t^{2}/8\blocks m_{u}} 
  \leq \left(2e(m_{l}+1)\right)^{\blocks}e^{-t^{2}/8\blocks m_{u}}.
\end{align*}
Since $m_{u}=(1+\delta)2\widetilde{n}/\sqrt{k}$
  and $b\widetilde{n}=n$, the desired conclusion
  follows immediately.
\end{proof}

The next lemma will be useful in bounding $P_{2}$,
  i.e.~the probability 
  that the sets of symbols associated to the edges of 
  the canonical largest noncrossing $G_i$'s matchings have a 
  ``relatively large'' overlap.
The result in fact shows how to translate tail bounds for an 
  urn occupancy model into bounds for $P_2$.
\begin{lemma}\label{lem:boundP2}
If $b$ is a positive integer,
  $a\geq 0$, $r\geq t\geq 0$, and $s=\lceil r-t\rceil$,
  then 
\[
P_{2} = P_{2}(a,r,t)  \leq \prob{}{k-Y^{(k,s)} \leq r-a}.
\]
\end{lemma}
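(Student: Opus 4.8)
The plan is to directly unpack the definition of $P_2$ and connect the "symbol overlap" event to an urn-occupancy event, invoking Lemma~\ref{lem:overlap} to control the conditional distribution of the symbol sets. Recall $P_2 = \prob{}{R(G)\leq r-a,\ \sum_{i=1}^{\blocks}R(G_i)\geq r-t}$. On the event in question, write $r_i = R(G_i)$, so $\sum_i r_i \geq r-t$; let $M_i$ be the canonical repetition-free noncrossing matching of $G_i$, with symbol set $\calC_{r_i}(G_i)$ of size $r_i$. The union $M = \cup_i M_i$ is a noncrossing matching of $G$, and after deleting duplicates we obtain a repetition-free noncrossing matching $M'$ of $G$ with $|M'| = |\cup_i \calC_{r_i}(G_i)|$. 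Since $R(G) \geq |M'|$, the event $R(G)\leq r-a$ forces $|\cup_i \calC_{r_i}(G_i)| \leq r-a$.

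The key step is then to recognize $|\cup_i \calC_{r_i}(G_i)|$ as the number of \emph{occupied} urns in a Grouped Urn model. First I would condition on the edge sets $E(G_i)$ (equivalently, on the values $r_1,\dots,r_{\blocks}$): by Lemma~\ref{lem:overlap}, conditioned on $R(G_i)=r_i$ and the relevant $\sigma$-algebra, each $\calC_{r_i}(G_i)$ is uniform over the size-$r_i$ subsets of $[k]$, and these are independent across $i$ (the symbol labels on distinct components of $G$ are assigned independently). So conditionally, $\cup_i \calC_{r_i}(G_i)$ is exactly the set of occupied urns in the Grouped Urn $(k,\vec r)$-model with $\vec r = (r_1,\dots,r_{\blocks})$, and $|\cup_i \calC_{r_i}(G_i)| = k - X^{(k,\vec r)}$. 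Hence
\[
P_2 \ \leq\ \prob{}{k - X^{(k,\vec{r})} \leq r-a \ \text{ with } \textstyle\sum_i r_i \geq r-t}.
\]
Now I would pass from the Grouped Urn model to the Classical Urn model via Lemma~\ref{lem:dominance}: since $X^{(k,\vec r)}$ stochastically dominates $Y^{(k,s')}$ where $s' = \sum_i r_i$, we have $k - X^{(k,\vec r)} \preceq k - Y^{(k,s')}$, so $\prob{}{k - X^{(k,\vec r)} \leq r-a} \leq \prob{}{k - Y^{(k,s')} \leq r-a}$. Finally, adding more balls only decreases the expected number of empty urns, so $k - Y^{(k,s)}$ dominates $k - Y^{(k,s')}$ whenever $s \geq s'$; taking $s = \lceil r-t\rceil \geq \sum_i r_i$ gives $\prob{}{k - Y^{(k,s')}\leq r-a} \leq \prob{}{k - Y^{(k,s)} \leq r-a}$, which is the claimed bound.

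The main obstacle is the conditioning bookkeeping in the second paragraph: one must be careful that, on the event $\{R(G_i)=r_i\ \forall i\}$, the symbol sets $\calC_{r_i}(G_i)$ are genuinely independent and each marginally uniform, which requires applying Lemma~\ref{lem:overlap} componentwise and using that the $G_i$ live on disjoint vertex sets so their edge sets — and hence the canonical matchings $M_i$, which depend only on $E(G_i)$ — are determined by disjoint collections of symbol assignments. A minor subtlety is that the number of occupied urns $k - X^{(k,\vec r)}$ could in principle exceed $\sum_i r_i$ is impossible — it is at most $\sum_i r_i \leq s$ — so the monotonicity arguments are all in the safe direction; the inequalities only ever discard information, never require a matching lower bound. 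Once the identification with $k - X^{(k,\vec r)}$ is made, the two monotonicity steps (Lemma~\ref{lem:dominance} and "more balls $\Rightarrow$ fewer empty urns") are routine.
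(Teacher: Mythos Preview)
Your approach is essentially the paper's: condition on the $R(G_i)$'s, lower bound $R(G)$ by $\big|\bigcup_i \calC_{r_i}(G_i)\big|$, recognize the latter (via Lemma~\ref{lem:overlap} and the independence across blocks) as $k-X^{(k,\vec r)}$, and then invoke Lemma~\ref{lem:dominance} to pass to the Classical Urn model. That is exactly what the paper does.

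There is, however, a genuine sign slip in your monotonicity step. On the event in $P_2$ you have $\sum_i r_i \ge r-t$, and since $\sum_i r_i$ is an integer this gives $s' = \sum_i r_i \ge \lceil r-t\rceil = s$, not $s \ge s'$ as you wrote. The monotonicity then runs the other way: because $s' \ge s$, the coupling ``throw $s$ balls, then $s'-s$ more'' shows $k-Y^{(k,s')}$ stochastically dominates $k-Y^{(k,s)}$, hence $\prob{}{k-Y^{(k,s')}\le r-a}\le \prob{}{k-Y^{(k,s)}\le r-a}$, which is the inequality you actually need. Relatedly, your ordering $k-X^{(k,\vec r)} \preceq k-Y^{(k,s')}$ is reversed: Lemma~\ref{lem:dominance} says $\prob{}{X^{(k,\vec r)}\ge t}\le \prob{}{Y^{(k,s')}\ge t}$, i.e.\ $X^{(k,\vec r)}$ is stochastically \emph{smaller}, so $k-X^{(k,\vec r)}$ is stochastically \emph{larger} than $k-Y^{(k,s')}$; it is this direction that yields $\prob{}{k-X^{(k,\vec r)}\le r-a}\le \prob{}{k-Y^{(k,s')}\le r-a}$. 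Your final displayed inequalities are correct, but the two intermediate ordering statements are each flipped; once you straighten those out the proof is complete and coincides with the paper's.
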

\begin{proof}
Clearly,
\begin{align*}
P_{2} & = 
    \sum_{s_1,\ldots,s_{\blocks}\geq 0\atop s_1+\cdots+s_{\blocks}\geq r-t}
    \prob{}{R(G)\leq r-a 
      \,\left|\, R(G_i)= s_i, \forall i\in [\blocks]\right.}
      \prob{}{R(G_i)= s_i, \forall i\in [\blocks]}.
\end{align*}
Let $\calC_{\ell}(\cdot)$ be as defined in Section~\ref{sec:distrib}.
Note that if we take the union of noncrossing matchings, one $M_i$
  for each $G_i$, we get a noncrossing matching $M=\cup_i M_i$ of $G$. 
However, the edges of $M$
do not necessarily have distinct associated symbols. 
By throwing away all but one of the edges of $M$ 
  associated to a given symbol, one obtains a repetition-free
  noncrossing matching of $G$.
It follows that, conditioning on $R(G_{i})=s_{i}$ for all $i\in [\blocks]$,
\[
R(G) \geq \left|\bigcup_{i=1}^{\blocks}\calC_{s_i}(G_i)\right|.
\]
Thus,
\begin{align*}
\prob{}{R(G)\leq r-a 
      \,\left|\, R(G_i)= s_i, \forall i\in [\blocks]\right.}
  & \leq 
\prob{}{\left|\bigcup_{i=1}^{\blocks}\calC_{s_i}(G_i)\right|\leq r-a 
      \,\left|\, R(G_i)= s_i, \forall i\in [\blocks]\right.} \\
  & = 
\prob{}{\left|\bigcup_{i=1}^{\blocks}\calC_{s_i}(G_i)\right|\leq r-a 
      \,\left|\, \left|\calC_{s_{i}}(G_i)\right|=s_i, \forall i\in [\blocks]\right.}.
\end{align*}
Let $\vec{s}=(s_1,\ldots,s_b)$.
We claim that $\left|\bigcup_{i=1}^{\blocks}\calC_{s_i}(G_i)\right|$
  conditioned on $\left|\calC_{s_{i}}(G_i)\right|=s_i$, for all $i\in [\blocks]$,
  is distributed exactly as the number of nonempty urns left
  when the Grouped Urn $(k,\vec{s})$-model 
  (as defined in Section~\ref{sec:distrib}) ends,
  i.e.~is distributed as the random variable $k-X^{(k,\vec{s})}$ 
  (where $X^{(k,\vec{s})}$ is as defined in Section~\ref{sec:distrib}).
Indeed, it suffices to note that by Proposition~\ref{prop:regimes}, 
  conditioned on $\left|\calC_{s_{i}}(G_i)\right|=s_i$, the 
  set $S_i=\calC_{s_i}(G_i)$ is a randomly and uniformly chosen
  subset of $[k]$ of size $s_i$, and that $k-X^{(k,\vec{s})}$
  is distributed exactly as 
  $\left|\bigcup_{i=1}^{\blocks}\calC_{s_i}(G_i)\right|$. 
It follows, from the forgoing discussion and Lemma~\ref{lem:dominance}, that
\[
\prob{}{R(G)\leq r-a 
      \,\left|\, R(G_i)= s_i, i\in [\blocks]\right.}
  = \prob{}{k-X^{(k,\vec{s})}\leq r-a}
  \leq \prob{}{k-Y^{(k,s)}\leq r-a}. \qedhere
\]
\end{proof}

The next result establishes the first of the announced tail bounds, for
  the first of the three regimes indicated at the start of this section.
An interesting aspect, that is not evident from the theorem's statement, 
  is the following fact that is implicit in its proof; if the speed of 
  growth of $n$ as a function of $k$ is not too fast, then we may choose
  $\blocks$ as a function of $k$ so that $\sum_{i=1}^{\blocks}R(G_i)$ is 
  roughly (with high probability) equal to~$R(G)$.
In particular, the proof argument rests on the fact that, for an 
  appropriate choice of parameters, the 
  canonical largest noncrossing matching of $G_i$ is of size approximately
  $2(n/\blocks)/\sqrt{k}$, and with high probability 
  there is very little overlap between the symbols associated to
  the edges of the canonical largest noncrossing 
  matchings of distinct $G_i$'s.
\begin{theorem}\label{theo:small-regime}
If $n=o(k\sqrt{k})$, then for every $0<\xi\leq 1$
  there is a sufficiently large constant $k_{0}=k_{0}(\xi)$ such that,
  for all $k>k_{0}$,
\[
\prob{}{R(G) \leq (1-\xi)2n/\sqrt{k}} \leq 2e^{-\frac{1}{10}\xi^22n/\sqrt{k}}.
\]
\end{theorem}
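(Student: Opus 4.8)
The plan is to combine Lemma~\ref{lem:fact}, Lemma~\ref{lem:boundP1}, and Lemma~\ref{lem:boundP2} with the tail bound of Proposition~\ref{prop:regimes}\ref{it:small-regime}, choosing all free parameters ($\widetilde n$, and hence $b$, and the slack parameters $\delta$, $t$, $a$) as suitable functions of $k$ so that both error terms $P_1$ and $P_2$ become exponentially small in $n/\sqrt k$. First I would fix $r = bm_l$ with $m_l=(1-\delta)2\widetilde n/\sqrt k$, so that $r$ is already roughly $(1-\delta)2n/\sqrt k$; then picking $a$ and $t$ to be small constant multiples of $n/\sqrt k$ (and $\delta$ a small constant depending on $\xi$, say $\delta=\xi/4$ or so), the event $R(G)\le (1-\xi)2n/\sqrt k$ will be contained in the event $R(G)\le r-a$, reducing everything to bounding $P_1(bm_l,t)+P_2(a,bm_l,t)$.

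For $P_1$, Lemma~\ref{lem:boundP1} gives $P_1 \le (2e(m_l+1))^{b}\exp(-t^2/(16(1+\delta)n/\sqrt k))$. The key point is the choice of $\widetilde n$: taking $\widetilde n$ to grow like a large-but-fixed multiple of $\sqrt k$ (so $m_l$ is a constant, and the hypothesis $C\sqrt k\le \widetilde n\le \delta k/12$ of Theorem~\ref{theo:small-main} is met since $n=o(k\sqrt k)$ forces $\widetilde n = o(k)$ once $b\to\infty$), the prefactor is $e^{O(b)}$ while the exponential is $e^{-\Omega(t^2/(n/\sqrt k))}$; choosing $t=c\,n/\sqrt k$ for a small constant $c$ and recalling $b=n/\widetilde n$, the $e^{O(b)}=e^{O(n/\widetilde n)}$ term is dominated because $\widetilde n$ can be taken large enough (as a function of $\xi$) that $O(1/\widetilde n)$ beats the $-\Omega(c^2)$ rate. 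This is where the hypothesis $n=o(k\sqrt k)$ is really used — it is exactly what lets $b\to\infty$ while still $\widetilde n=o(k)$, and lets us drive the $(m_l+1)^b$ loss below the gain from the exponential.

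For $P_2$, Lemma~\ref{lem:boundP2} reduces it to $\prob{}{k-Y^{(k,s)}\le r-a}$ with $s=\lceil r-t\rceil \approx r-t$. Since $r-t - (r-a) = a-t$, if I arrange $a>t$ (say $a=2t$), then $r-a \le s-(a-t)$, so Proposition~\ref{prop:regimes}\ref{it:small-regime} with slack parameter $a-t=\Theta(n/\sqrt k)$ gives $P_2 \le (es^2/(k(a-t)))^{a-t}$. Now $s=\Theta(n/\sqrt k)$ and $n=o(k\sqrt k)$ means $s = o(k)$, so $s^2/(k(a-t)) = \Theta(s^2/(k\cdot n/\sqrt k)) = \Theta(s/k)\cdot\Theta(1) = o(1)$; hence the base is $o(1)$ raised to the power $\Theta(n/\sqrt k)$, which is $e^{-\Omega((n/\sqrt k)\log(k/s))}$, comfortably $\le e^{-\Omega(n/\sqrt k)}$. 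Assembling: for $k>k_0(\xi)$ both $P_1$ and $P_2$ are at most $e^{-\frac{1}{20}\xi^2 \cdot 2n/\sqrt k}$, so their sum is at most $2e^{-\frac{1}{10}\xi^2\cdot 2n/\sqrt k}$, which is the claim after matching constants (the explicit constant $1/10$ is obtained by tracking the $\delta=\Theta(\xi)$, $t=\Theta(\xi n/\sqrt k)$ choices through the two exponents and being slightly generous). The main obstacle is the bookkeeping in the first step — ensuring $e^{O(b)}$ in $P_1$ is genuinely absorbed — which forces $\widetilde n$ (hence the constant $C$ in Theorem~\ref{theo:small-main}, hence $k_0$) to be chosen depending on $\xi$; everything else is a routine substitution of the already-proven tail bounds.
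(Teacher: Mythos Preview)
Your overall plan is exactly the paper's: apply Lemma~\ref{lem:fact} with $r=bm_l$, bound $P_1$ via Lemma~\ref{lem:boundP1} and $P_2$ via Lemma~\ref{lem:boundP2} together with Proposition~\ref{prop:regimes}\ref{it:small-regime}, choosing $\delta,t,a$ as constant multiples of $\xi$ and $\xi n/\sqrt{k}$ respectively. The one substantive difference is your choice of~$\widetilde n$. You take $\widetilde n=C'\sqrt{k}$ for a large constant $C'=C'(\xi)$, so that $m_l$ is constant and the loss $(2e(m_l+1))^b=\exp\bigl((n/\sqrt{k})\cdot\Theta(\ln C')/C'\bigr)$ is a \emph{fixed} fraction of $n/\sqrt{k}$ in the exponent, which you then make small by pushing $C'$ large. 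The paper instead takes $\widetilde n=k^{3/4}$ (any exponent strictly between $1/2$ and $1$ works), so that $m_l=\Theta(k^{1/4})$ and $b=n/k^{3/4}$, giving $b\ln(2e(m_l+1))=(n/\sqrt{k})\cdot\Theta(k^{-1/4}\ln k)=o(n/\sqrt{k})$; the prefactor is then automatically negligible for large $k$, with no tuning of $\widetilde n$ against $\xi$. Both routes succeed, but the paper's choice makes the bookkeeping for the explicit constant $1/10$ cleaner.

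Two small corrections. First, your sentence ``this is where the hypothesis $n=o(k\sqrt k)$ is really used'' in the $P_1$ paragraph is misplaced: with $\widetilde n=C'\sqrt{k}$ one has $\widetilde n\le \delta k/12$ and $b\to\infty$ automatically for large $k$ (once $n/\sqrt{k}\to\infty$), independent of $n=o(k\sqrt{k})$. The hypothesis $n=o(k\sqrt{k})$ is used \emph{only} in the $P_2$ step, exactly where you later invoke it to make the base $es^2/(k(a-t))=\Theta(n/(k\sqrt{k}))=o(1)$. Second, your final arithmetic is off: if $P_1,P_2\le e^{-\frac{1}{20}\xi^2\cdot 2n/\sqrt{k}}$ then $P_1+P_2\le 2e^{-\frac{1}{20}\xi^2\cdot 2n/\sqrt{k}}$, not $2e^{-\frac{1}{10}\xi^2\cdot 2n/\sqrt{k}}$. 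You need each of $P_1,P_2\le e^{-\frac{1}{10}\xi^2\cdot 2n/\sqrt{k}}$ separately; the paper arranges this by taking $c$ large with $(1-1/c)^2\ge(9/10)(1+\xi/c)$, $\delta=\xi/c$, $t=(1-1/c)\xi\cdot 2n/\sqrt{k}$, $a=\xi\cdot 2n/\sqrt{k}$.
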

\begin{proof}
Let $c>1$ be large enough so $(1-1/c)^2\geq (9/10)(1+\xi/c)$.
Let $\delta=\xi/c$ and $t=(1-1/c)\xi 2n/\sqrt{k}$.
Now, choose $\widetilde{n}=\widetilde{n}(k)=k^{3/4}$ (instead of $3/4$,
  any exponent strictly between $1/2$ and $1$ suffices).
Note that one can choose $\widetilde{k}_0$
  (depending on $\xi$ through $\delta$) 
  so that for all $k\geq \widetilde{k}_0$ the conditions on
  $\widetilde{n}$ of Theorem~\ref{theo:small-main}
  are satisfied.
Let $m_l$ and $m_u$ be as in Theorem~\ref{theo:small-main}.
Note that $m_{l}=(1-\xi/c)2\tilde{n}/\sqrt{k}=\Theta(k^{1/4})$,
  $b=n/\tilde{n}=n/k^{3/4}$, and
  $bm_{u}=(1+\xi/c)2n/\sqrt{k}$.
Hence, by Lemma~\ref{lem:boundP1},
\[
P_{1} \leq \exp\left(b\ln(2e(m_{l}+1)))-\frac{t^2}{8bm_{u}}\right)
  =
  \exp\left(\frac{n}{\sqrt{k}}\Theta\left(k^{-1/4}\ln k\right)
     -\frac{(1-1/c)^{2}\xi^2 2n/\sqrt{k}}{8(1+\xi/c)}\right).
\]
Since $k^{-1/4}\ln k=o(1)$ 
  and $(1-1/c)^2\geq (9/10)(1+\xi/c)$,
  it follows that for a sufficiently large $k'_0\geq\widetilde{k}_0$ 
  it holds that for all $k\geq k'_{0}$, 
\[
P_{1} 
  \leq \exp\left(-\frac{(1-1/c)^{2}\xi^22n/\sqrt{k}}{9(1+\xi/c)}\right)
  \leq \exp\left(-\frac{1}{10}\xi^2 2n/\sqrt{k}\right).
\]
On the other hand, since 
  $t=(1-1/c)\xi 2n/\sqrt{k}$, if we fix $a=\xi 2n/\sqrt{k}$, then
  we have that $t-a\leq -(\xi/c)2n/\sqrt{k}$.
Taking 
  $s = bm_{l}-t = (1-\xi)2n/\sqrt{k}\leq 2n/\sqrt{k}$, 
  as $\xi\leq 1$, by Lemma~\ref{lem:boundP2} and 
  Proposition~\ref{prop:regimes}, Part~\ref{it:small-regime}, 
\begin{align*}
P_2 & \leq \prob{}{k-Y^{(k,s)} \leq bm_{l}-a} 
  = \ \prob{}{k-Y^{(k,s)} \leq s+t-a} \\
  & \leq \prob{}{k-Y^{(k,s)} \leq s-\frac{\xi 2n}{c\sqrt{k}}} 
  \leq \left(\frac{2cen}{\xi k\sqrt{k}}\right)^{(\xi/c)2n/\sqrt{k}}.
\end{align*}
Let $k''_0$ be sufficiently large (depending on $\xi$) so that
  $2cen/(\xi k\sqrt{k})\leq e^{-c\xi/10}$ for all $k \geq k''_0$ 
  (such a $k''_0$ exists because $n=o(k\sqrt{k})$).
It follows that for $k\geq k''_0$ we can upper bound 
  $P_2$ by $\exp\left(-\frac{1}{10}\xi^2 2n/\sqrt{k}\right)$.

Since by Lemma~\ref{lem:fact} we know that
  $\prob{}{R(G)\leq (1-\xi)2n/\sqrt{k}}\leq P_1+P_2$, it follows that for 
  $k\geq k_0=k_{0}(\xi)\eqdef\max\{k'_0,k''_0\}$ we get the claimed bound.
\end{proof}

Next, we consider a second regime, but first we establish an
  inequality that we will soon apply.
\begin{claim}\label{claim}
For every $0\leq x\leq 1$ and $\rho\geq 0$,
\[
e^{-\rho(1-x)}-e^{-\rho}-x(1-e^{-\rho}) \leq 0.
\]
\end{claim}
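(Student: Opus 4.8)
The plan is to recognize the claimed inequality as nothing more than convexity (Jensen's inequality) for the exponential function. First I would rewrite the statement in the equivalent form
\[
e^{-\rho(1-x)} \ \le \ e^{-\rho} + x\left(1-e^{-\rho}\right) \ = \ (1-x)\,e^{-\rho\cdot 1} + x\,e^{-\rho\cdot 0}.
\]
Since $0\le x\le 1$, the quantity $1-x$ is the convex combination $(1-x)\cdot 1 + x\cdot 0$ of the points $1$ and $0$, and the map $t\mapsto e^{-\rho t}$ is convex (its second derivative is $\rho^{2}e^{-\rho t}\ge 0$). Applying the defining inequality of convexity with weights $1-x$ and $x$ at the points $1$ and $0$ gives precisely the displayed bound, and subtracting the right-hand side yields the claim.

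An essentially equivalent route, which I would mention as an alternative, is to fix $\rho\ge 0$ and study the function $f(x)\eqdef e^{-\rho(1-x)}-e^{-\rho}-x(1-e^{-\rho})$ on $[0,1]$. A direct substitution gives $f(0)=e^{-\rho}-e^{-\rho}=0$ and $f(1)=1-e^{-\rho}-(1-e^{-\rho})=0$, so $f$ vanishes at both endpoints; and $f''(x)=\rho^{2}e^{-\rho(1-x)}\ge 0$, so $f$ is convex on $[0,1]$. A convex function on an interval lies on or below the chord joining the values at the endpoints, and here that chord is the constant function $0$; hence $f(x)\le 0$ for all $x\in[0,1]$.

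There is essentially no obstacle in this claim: the whole content is the observation that the affine function $x\mapsto e^{-\rho}+x(1-e^{-\rho})$ is exactly the secant line of the convex curve $x\mapsto e^{-\rho(1-x)}$ through its values at $x=0$ and $x=1$, so the curve stays below it throughout $[0,1]$. The only minor point worth stating carefully is that $1-x$ (rather than $x$) is the argument being interpolated, which is why the coefficient of $e^{-\rho}$ is $1-x$.
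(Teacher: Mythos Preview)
Your proof is correct. The paper, however, argues differently: it first multiplies through by $e^{\rho}$ to obtain the equivalent inequality $e^{\rho x}-1-x(e^{\rho}-1)\le 0$, then expands both exponentials as power series to rewrite the left side as $\sum_{n\ge 1}\frac{\rho^{n}}{n!}(x^{n}-x)$, and concludes term by term using $x^{n}\le x$ for $0\le x\le 1$. Your approach recognizes the inequality as the secant-line bound for the convex map $t\mapsto e^{-\rho t}$ (equivalently, checks $f(0)=f(1)=0$ and $f''\ge 0$). The two arguments are close cousins---the termwise inequality $x^{n}\le x$ is in effect a bare-hands verification of the convexity you invoke---but yours is the more conceptual packaging and avoids any series manipulation, while the paper's version is entirely self-contained and does not appeal to convexity as a named principle.
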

\begin{proof}
Since $0\leq x\leq 1$, it holds that 
  $0\leq x^n\leq x$ for all $n\in\NN\setminus\setof{0}$.
Then, since $e^{y}=\sum_{n\in\NN} y^n/n!$ and performing some basic 
  arithmetic,
\[
e^{\rho x}-1 - x(e^{\rho}-1) = \sum_{n\geq 1} 
  \frac{\rho^n}{n!}\left(x^n-x\right)\leq 0.
\]
Multiplying by $e^{-\rho}$, the claimed result immediately follows.
\end{proof}

\begin{theorem}\label{theo:middle-regime}
Let $\rho > 0$ and $0<\xi<1$.
If $n=\frac{1}{2}\rho k\sqrt{k}$, then there 
  is a sufficiently large constant $k_{0}=k_{0}(\rho,\xi)$ such that 
  for all $k>k_{0}$,
\[
\prob{}{R(G) \leq (1-\xi)k(1-e^{-\rho})} 
    \leq 2e^{-\frac{\xi^{2}}{32(1+\xi/12)}k(1-e^{-\rho})}
    \leq 2e^{-\frac{1}{35}\xi^{2}k(1-e^{-\rho})}.
\]
\end{theorem}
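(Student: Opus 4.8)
The plan is to follow the template of the proof of Theorem~\ref{theo:small-regime}: use Lemma~\ref{lem:fact} to write $\prob{}{R(G)\le r-a}\le P_1+P_2$, bound $P_1$ via Lemma~\ref{lem:boundP1}, bound $P_2$ via Lemma~\ref{lem:boundP2} followed by the Bernstein-type urn estimate of Proposition~\ref{prop:regimes}, Part~\ref{it:middle-regime}, and then tune the free parameters so that each of $P_1$ and $P_2$ is at most $\exp(-\tfrac{\xi^2}{32(1+\xi/12)}k(1-e^{-\rho}))$; summing the two and noting that $32(1+\xi/12)<35$ for $\xi<1$ yields both displayed inequalities. Concretely, I would set $\delta=\xi/12$, pick $\widetilde n=\widetilde n(k)$ to be an integer with $C(\delta)\sqrt k\le\widetilde n\le\delta k/12$ (so Theorem~\ref{theo:small-main} applies), e.g.\ $\widetilde n=\lfloor\delta k/12\rfloor$, put $\blocks=n/\widetilde n$ and $m_l=(1-\delta)2\widetilde n/\sqrt k$, and record that $\blocks=\Theta(\sqrt k)$, $m_l=\Theta(\sqrt k)$, and $\blocks m_l=(1-\delta)2n/\sqrt k=(1-\delta)\rho k$. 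I then apply Lemma~\ref{lem:fact} with $r=\blocks m_l$, $t=\tfrac\xi2\rho k$, and $a=r-(1-\xi)k(1-e^{-\rho})$; since $\rho\ge1-e^{-\rho}$ and $\delta<\xi<1$, one checks $a\ge0$ and $r\ge t\ge0$, so that $\prob{}{R(G)\le(1-\xi)k(1-e^{-\rho})}\le P_1(\blocks m_l,t)+P_2(a,\blocks m_l,t)$.

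For $P_1$: Lemma~\ref{lem:boundP1} gives $P_1\le(2e(m_l+1))^{\blocks}\exp(-t^2/(16(1+\delta)n/\sqrt k))$. As $\blocks=\Theta(\sqrt k)$ and $m_l=\Theta(\sqrt k)$, the prefactor is $\exp(\Theta(\sqrt k\ln k))=\exp(o(k))$, while $16(1+\delta)n/\sqrt k=8(1+\delta)\rho k$, so with $t=\tfrac\xi2\rho k$ the main term equals $\exp(-\xi^2\rho k/(32(1+\xi/12)))$. Because $\rho>1-e^{-\rho}$ for $\rho>0$, this exponent exceeds $\tfrac{\xi^2}{32(1+\xi/12)}k(1-e^{-\rho})$ by an amount linear in $k$, which absorbs the subexponential prefactor once $k\ge k_0(\rho,\xi)$; hence $P_1\le\exp(-\tfrac{\xi^2}{32(1+\xi/12)}k(1-e^{-\rho}))$.

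For $P_2$: Lemma~\ref{lem:boundP2} with $s=\lceil\blocks m_l-t\rceil$ and $r-a=(1-\xi)k(1-e^{-\rho})$ gives $P_2\le\prob{}{k-Y^{(k,s)}\le(1-\xi)k(1-e^{-\rho})}=\prob{}{Y^{(k,s)}\ge k(e^{-\rho}+\xi(1-e^{-\rho}))}$, an upper-tail event. The crux is controlling $\lambda=\expec{}{Y^{(k,s)}}=k(1-1/k)^s$. Since $\blocks m_l-t=(1-\tfrac{7\xi}{12})\rho k$ we have $s/k\ge(1-\tfrac{7\xi}{12})\rho$, so $\lambda\le ke^{-s/k}\le ke^{-(1-\frac{7\xi}{12})\rho}$, and Claim~\ref{claim} with $x=\tfrac{7\xi}{12}\in(0,1)$ turns the right-hand side into the affine bound $\lambda\le k(e^{-\rho}+\tfrac{7\xi}{12}(1-e^{-\rho}))$; thus the deviation $a'\eqdef k(e^{-\rho}+\xi(1-e^{-\rho}))-\lambda$ satisfies $a'\ge\tfrac{5\xi}{12}k(1-e^{-\rho})>0$. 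For $k$ large one also has $\lambda\ge ke^{-\rho}$ (as $s/k\le(1-\tfrac{7\xi}{12})\rho<\rho$ with a linear-in-$k$ margin that dominates the correction between $(1-1/k)^s$ and $e^{-s/k}$), whence $a'\le\xi k(1-e^{-\rho})$ and $kpq=\lambda(1-\lambda/k)\le k\cdot(1-e^{-\rho})$. Feeding these into Proposition~\ref{prop:regimes}, Part~\ref{it:middle-regime},
\[
P_2\le\exp\!\left(-\frac{{a'}^{2}}{2(kpq+a'/3)}\right)\le\exp\!\left(-\frac{(5\xi/12)^{2}\,k(1-e^{-\rho})}{2(1+\xi/3)}\right),
\]
and the elementary inequality $25\cdot32(1+\xi/12)\ge288(1+\xi/3)$ (valid for $0<\xi<1$) shows this last exponent is at least $\tfrac{\xi^2}{32(1+\xi/12)}k(1-e^{-\rho})$, as required.

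The main obstacle is the joint calibration inside the $P_2$ step: the $P_1$ bound forces $t$ — equivalently the parameter $x=\tfrac{7\xi}{12}$ measuring how far $s/k$ drops below $\rho$ — to be a sufficiently large fraction of $\xi$, whereas the $P_2$ bound forces the complementary ``deficit'' $\xi-x$ to be a sufficiently large fraction of $\xi$; these competing demands are simultaneously satisfiable precisely because of the slack $\rho>1-e^{-\rho}$, and Claim~\ref{claim} is exactly the device that linearizes the exponential $e^{-(1-x)\rho}$ governing $\lambda$ into $e^{-\rho}+x(1-e^{-\rho})$, keeping the bookkeeping transparent. A secondary subtlety is that in the denominator $2(kpq+a'/3)$ of the urn tail bound one must peel off a clean factor $(1-e^{-\rho})$ from $kpq=\lambda(1-\lambda/k)$ using $\lambda\le k$ and $1-\lambda/k\le1-e^{-\rho}$; settling for the cruder $(1-e^{-\rho})^2$ or the unrelated quantity $\rho$ would make the resulting bound deteriorate as $\rho\to0$ or $\rho\to\infty$, respectively.
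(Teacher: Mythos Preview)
Your proof is correct and follows essentially the same approach as the paper: split via Lemma~\ref{lem:fact}, bound $P_1$ by Lemma~\ref{lem:boundP1}, and bound $P_2$ by Lemma~\ref{lem:boundP2} combined with Claim~\ref{claim} and Proposition~\ref{prop:regimes}, Part~\ref{it:middle-regime}. The only differences are cosmetic parameter choices --- the paper takes $\widetilde{n}=k^{3/4}$ and $t=(2\xi/3)\rho k$ (so $x=3\xi/4$ in Claim~\ref{claim}, making $P_2$ land exactly on the headline constant), whereas you take $\widetilde{n}=\lfloor\delta k/12\rfloor$ and $t=(\xi/2)\rho k$ (so $x=7\xi/12$, giving a slightly stronger $P_2$ and then invoking $\rho>1-e^{-\rho}$ to push $P_1$ down to the headline); either calibration works.
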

\begin{proof}
Since $\xi<1$,
  the second stated inequality follows immediately from the first one.
We thus focus on establishing the first stated inequality. 
 
Let $\delta=\xi/12$. 
Now, choose $\tilde{n}=k^{3/4}$ (instead of $3/4$,
  any exponent strictly between $1/2$ and $1$ suffices)
  and set $b=n/\widetilde{n}$.
Note that one can choose $k'_0$
  (depending on $\xi$ through $\delta$) 
  so that for all $k>k'_0$ the conditions on
  $\tilde{n}=\tilde{n}(k)$ of Theorem~\ref{theo:small-main}
  are satisfied.
Let $m_{l}= (1-\delta)2\widetilde{n}/\sqrt{k}$ and observe that 
  $bm_{l}=(1-\delta)2n/\sqrt{k}=(1-\xi/12)\rho k$.
Choose $t=(2\xi/3)\rho k$ and note that 
  $s= bm_{l}-t = (1-3\xi/4)\rho k$.
Let $\lambda = \expec{}{Y^{(k,s)}}=k(1-1/k)^{s}$ be 
  as in Proposition~\ref{prop:regimes}.
We claim that for $0< \xi < 1$,
\begin{align}\label{eqn:lambda-bound}
\lambda & \leq ke^{-\rho} + (3\xi/4) k(1-e^{-\rho}).
\end{align}
Indeed, since $1+x\leq e^{x}$ we have that 
  $\lambda=k(1-1/k)^{s}\leq ke^{-\rho(1-3\xi/4)}$, 
  so to prove~\eqref{eqn:lambda-bound}
  it suffices to recall that by Claim~\ref{claim} we have that
  $e^{-\rho(1-3\xi/4)} \leq e^{-\rho}+(3\xi/4)(1-e^{-\rho})$.

Now, fix $\tilde{a}$ and $a$ so $\tilde{a}= \xi k(1-e^{-\rho})$ and 
  $a=bm_{l}-k(1-e^{-\rho})+\tilde{a}$.
By Lemma~\ref{lem:boundP2} and~\eqref{eqn:lambda-bound},
\begin{align*}
P_{2} &
  \leq \prob{}{k-Y^{(k,s)} \leq bm_{l}-a}
  = \prob{}{Y^{(k,s)} \geq ke^{-\rho}+\tilde{a}}
  \leq \prob{}{Y^{(k,s)} \geq \lambda +(\xi/4)k(1-e^{-\rho})}.
\end{align*}
Hence, taking $p=\lambda/k\leq 1$, $q=1-p$,
  and applying Proposition~\ref{prop:regimes},
  Part~\ref{it:middle-regime}, 
\begin{align*}
P_{2}  & 
  \leq \exp\left(-\frac{\xi^{2}(1-e^{-\rho})^{2}k}{32(pq+(\xi/12)(1-e^{-\rho}))}\right)
  \leq \exp\left(-\frac{\xi^{2}(1-e^{-\rho})^{2}k}{32(q+(\xi/12)(1-e^{-\rho}))}\right).
\end{align*}
Again by Proposition~\ref{prop:regimes}, we know that $\lambda=k(1-1/k)^{s}$.
Thus, recalling that by our choice of parameters $s=(1-3\xi/4)\rho k$
  and since $(1-1/k)^{s}=(1-1/k)^{\rho(1-3\xi/4)k}$ 
  converges to $e^{-\rho(1-3\xi/4)}>e^{-\rho}$ when 
  $k$ goes to $\infty$, it follows 
  that $q=1-p=1-(1-1/k)^{s}$ can be 
  upper bounded by $1-e^{-\rho}$ for all $k>k''_{0}$
  and some sufficiently large $k''_0>k'_0$ (depending on $\xi$).
Hence, for $k>k''_0$ it holds that 
  $q+(\xi/12)(1-e^{-\rho})\leq (1+\xi/12)(1-e^{-\rho})$,
  and
\begin{align*}
P_{2}  & 
  \leq \exp\left(-\frac{\xi^{2}}{32(1+\xi/12)}k(1-e^{-\rho})\right).
\end{align*}
We will now upper bound $P_1$.
Note that, by our choice for $\widetilde{n}$ and 
  the hypothesis on $n$, we have 
  $m_{l}=(1-\xi/12)2\tilde{n}/\sqrt{k}=(1-\xi/12) k^{1/4} \leq k$,
  $b=n/\tilde{n}\leq\rho k^{3/4}$, and
  $bm_{u}=(1+\xi/12)2n/\sqrt{k}=(1+\xi/12)\rho k$.
Recalling that we fixed $t=(2\xi/3)\rho k$,
  by Lemma~\ref{lem:boundP1},
\[
P_{1} \leq \exp\left(b\ln(2e(m_{l}+1)))-\frac{t^2}{8bm_{u}}\right)
  \leq 
  \exp\left(\rho k^{3/4}\ln(2e(k+1)))-\frac{\xi^2 \rho k}{18(1+\xi/12)}\right).
\]
Since $k^{3/4}\ln(2e(k+1))=o(k)$ and because
  $1-e^{-\rho}\leq \rho$, 
  it follows that for a sufficiently large $k'''_0$ it holds that 
  for all $k> k'''_{0}$, 
\[
P_{1} 
  \leq \exp\left(-\frac{\xi^2}{19(1+\xi/12)}\rho k\right)
  \leq \exp\left(-\frac{\xi^2}{19(1+\xi/12)}k(1-e^{-\rho})\right).
\]
Since 
  $\prob{}{R(G)\leq (1-\xi)k(1-e^{-\rho})}\leq P_1+P_2$ for 
  $k>k_0=k_{0}(\rho,\xi)\eqdef\max\{k''_0,k'''_0\}$, 
  we get the claimed bound.
\end{proof}

Our next result establishes that if $n$ is sufficiently large with respect
  to $k$, then with high probability 
  the repetition-free LCS is of size $k$, 
  i.e.~it is a permutation of the underlying alphabet.
Moreover, the theorem's proof implicitly shows something stronger;
  if the speed of 
  growth of $n$ as a function of $k$ is fast enough, then we may choose
  $\blocks$ as a function of $k$ so that with 
  high probability every symbol of the 
  $k$-ary alphabet show up in association to an edge of
  a canonical maximum size matching of some $G_i$ --- chosen such 
  edges one obtains a noncrossing repetition free matching of $G$ of 
  the maximum possible size $k$.
\begin{theorem}\label{theo:large-regime}
If $n=(\frac{1}{2}+\xi)k\sqrt{k}\ln k$ for some $\xi>0$, then
  there is a sufficiently large constant $k_{0}=k_{0}(\xi)$ such that 
  for all $k>k_{0}$,
\[
\prob{}{R(G)\neq k} \leq \frac{2}{k^{\xi}}.
\]
\end{theorem}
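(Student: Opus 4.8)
Since $R(G)\leq k$ always holds, establishing the theorem amounts to bounding $\prob{}{R(G)\leq k-1}$, and the plan is to run exactly the machinery used in the proofs of Theorems~\ref{theo:small-regime} and~\ref{theo:middle-regime}: apply Lemma~\ref{lem:fact} to split this probability into the ``some block $G_i$ is too short'' term $P_1$ and the ``the canonical symbol sets overlap too much'' term $P_2$; bound $P_1$ via Lemma~\ref{lem:boundP1}; and bound $P_2$ via Lemma~\ref{lem:boundP2} together with Proposition~\ref{prop:regimes}, Part~\ref{it:large-regime}. What is special about this regime is only the parameter choice: here $2n/\sqrt{k}=(1+2\xi)k\ln k$ overshoots the target value $k$ by a logarithmic factor, so there is a large surplus, which we must split between the per-block concentration loss (the parameter $\delta$), the deviation we allow for $\sum_i R(G_i)$ (the parameter $t$), and a residual of at least $\xi k\ln k$ that we feed into the coupon-collector-type bound for the urns.

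Concretely, I would take $\widetilde{n}=k^{3/4}$ (any exponent strictly between $1/2$ and $1$ works, and makes the hypotheses of Theorem~\ref{theo:small-main} hold once $k$ is large in terms of $\xi$), $b=n/\widetilde{n}=(\tfrac12+\xi)k^{3/4}\ln k$, a sufficiently small $\delta=\delta(\xi)>0$, and $m_l=(1-\delta)2\widetilde{n}/\sqrt{k}$, so that $r\eqdef bm_l=(1-\delta)(1+2\xi)k\ln k$; then I would set $t$ to be a small fixed multiple of $k\ln k$ and $a\eqdef r-(k-1)$, which is nonnegative for $k$ large, so that $r-a=k-1$ and $r\geq t\geq 0$. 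Lemma~\ref{lem:fact} then gives $\prob{}{R(G)\neq k}=\prob{}{R(G)\leq r-a}\leq P_1(r,t)+P_2(a,r,t)$. For $P_1$, Lemma~\ref{lem:boundP1} yields $P_1\leq(2e(m_l+1))^{b}\exp\bigl(-t^2/(16(1+\delta)n/\sqrt{k})\bigr)$; here $b\ln(2e(m_l+1))=O(k^{3/4}(\ln k)^2)$ while $t^2/(16(1+\delta)n/\sqrt{k})=\Theta(k\ln k)$, so $P_1$ decays like $\exp(-\Theta(k\ln k))$ and in particular is at most $1/k^{\xi}$ once $k$ is large. For $P_2$, Lemma~\ref{lem:boundP2} with $s\eqdef\lceil r-t\rceil$ gives $P_2\leq\prob{}{k-Y^{(k,s)}\leq r-a}=\prob{}{k-Y^{(k,s)}\leq k-1}=\prob{}{Y^{(k,s)}\neq 0}$, and $\delta$ and $t$ are chosen precisely so that $s\geq(1+\xi)k\ln k$ for $k$ large; since $(1-1/k)^{s}$ is nonincreasing in $s$, Proposition~\ref{prop:regimes}, Part~\ref{it:large-regime} (equivalently the union bound $\prob{}{Y^{(k,s)}\neq 0}\leq k(1-1/k)^{s}\leq ke^{-s/k}$) gives $P_2\leq 1/k^{\xi}$. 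Adding the two estimates yields $\prob{}{R(G)\neq k}\leq 2/k^{\xi}$ for all $k>k_0=k_0(\xi)$.

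The heavy lifting is done entirely by the previously established results (Theorem~\ref{theo:small-main} and the urn tail bounds in Proposition~\ref{prop:regimes}), so the only genuine task --- and the one place an error could slip in --- is the bookkeeping of the previous paragraph: verifying that a single choice of $\delta$ and $t$ simultaneously makes the entropy term $b\ln(2e(m_l+1))$ negligible against $t^2/(n/\sqrt{k})$ and still leaves $s=\lceil r-t\rceil$ above $(1+\xi)k\ln k$. This reduces to an inequality of the shape $\delta(1+2\xi)+t/(k\ln k)\leq\xi$, comfortably satisfied by $\delta$ small and $t$ a suitably small multiple of $k\ln k$.
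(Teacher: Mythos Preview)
Your proposal is correct and follows essentially the same approach as the paper: same block size $\widetilde{n}=k^{3/4}$, same application of Lemma~\ref{lem:fact} with $r=bm_l$ and $r-a=k-1$, Lemma~\ref{lem:boundP1} for $P_1$, and Lemma~\ref{lem:boundP2} together with Proposition~\ref{prop:regimes}, Part~\ref{it:large-regime} for $P_2$. The paper simply makes the bookkeeping explicit by fixing $\delta$ so that $(1-\delta)(1+2\xi)=1+3\xi/2$ and $t=(\xi/2)k\ln k$, which lands $s$ exactly at $(1+\xi)k\ln k$; your constraint $\delta(1+2\xi)+t/(k\ln k)\leq\xi$ is the same arithmetic in disguise.
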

\begin{proof}
Let $\delta=\delta(\xi)>0$ be such that
  $(1-\delta)(1+2\xi)= 1+3\xi/2$.
Now, let $\widetilde{n}=k^{3/4}$ (instead of 
  $3/4$, any exponent strictly between $1/2$ and $1$ suffices)
  and set $b=n/\widetilde{n}$.
Note that one can choose $k'_{0}$ (depending 
  on $\xi$ through $\delta$) so that for all $k>k'_{0}$ the conditions 
  on $\widetilde{n}=\widetilde{n}(k)$  
  of Theorem~\ref{theo:small-main} are satisfied.
Let $m_{l}= (1-\delta)2\widetilde{n}/\sqrt{k}$ be as in 
  Theorem~\ref{theo:small-main}.
Observe that $\blocks m_{l}= (1-\delta)2n/\sqrt{k} = (1+3\xi/2)k\ln k$.
Choose $t=(\xi/2)k\ln k$ so that 
  $s= \blocks m_{l}-t=(1+\xi)k\ln k$.
Fix $a$ so $k-\blocks m_{l}+a=1$.
By Lemma~\ref{lem:boundP2} and Proposition~\ref{prop:regimes}, Part~\ref{it:large-regime},
\begin{align*}
P_{2} &   \leq \prob{}{Y^{(k,s)} \geq k-bm_{l}+a} 
  = \prob{}{Y^{(k,s)} \neq 0} 
  \leq \frac{1}{k^{\xi}}.
\end{align*}
By the hypothesis on $n$ and the choice of $\widetilde{n}$, we have that 
  $b=n/\tilde{n}=(\frac{1}{2}+\xi)k^{3/4}\ln k$,
  so recalling that $m_{l}=(1-\delta)2\widetilde{n}/\sqrt{k}=(1-\delta)2k^{1/4}$,
\begin{align*}
b\ln(2e(m_{l}+1)) & = O(k^{3/4}\ln^{2} k) = o(k\ln k).
\end{align*}
Furthermore, let $m_{u}= (1+\delta)2\widetilde{n}/\sqrt{k}$ be as in 
  Theorem~\ref{theo:small-main}.
Thus,
\begin{align*}
\frac{t^2}{16(1+\delta)n/\sqrt{k}} 
  & = \frac{t^{2}}{8bm_{u}} = \frac{\xi^{2}k\ln k}{32(1+\delta)(1+2\xi)}.
\end{align*}
Hence, by Lemma~\ref{lem:boundP1}, for a sufficiently large constant $k''_{0}$
  (again depending on $\xi$ through $\delta$),
  we can guarantee that, for all $k>k''_{0}$,
\[
P_{1}\leq (2e(m_{l}+1))^{b}e^{-t^{2}/8bm_{u}}
  = \exp\left(b\ln(2e(m_{l}+1))-\frac{t^{2}}{8bm_{u}}\right) 
  \leq \frac{1}{k^{\xi}}.
\] 
Summarizing, for $k>k_0=k_{0}(\xi)\eqdef\max\{k'_0,k''_0\}$, we get that 
  $\prob{}{R(G)\neq k}\leq P_1+P_2\leq 2/k^{\xi}$.
\end{proof}

{From} the lower tail bounds for $R(G)$ obtained above, one can easily
  derive lower bounds on the expected value of $R(G)$ via the 
  following well-known trick.
\begin{lemma}
If $X$ is a nonnegative random variable and $x>0$, then
  $\displaystyle\expec{}{X} \geq  x\left(1-\prob{}{X\leq x}\right)$.
\end{lemma}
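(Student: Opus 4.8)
The statement is the elementary fact that for a nonnegative random variable $X$ and any threshold $x>0$, one has $\expec{}{X}\geq x(1-\prob{}{X\leq x})$. This is a one-line consequence of the layer-cake / indicator argument, so the ``plan'' is really just to write that line cleanly. The approach is to bound $X$ from below by the random variable $x\cdot\indic{X>x}$ pointwise and then take expectations.

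The key steps, in order, are as follows. First I would observe that since $X\geq 0$, for every outcome we have $X\geq x\cdot\indic{X>x}$: indeed, on the event $\{X>x\}$ the right-hand side equals $x$ and $X>x$, while on the complementary event $\{X\leq x\}$ the right-hand side is $0\leq X$. Second, apply monotonicity of expectation to get $\expec{}{X}\geq \expec{}{x\cdot\indic{X>x}} = x\cdot\prob{}{X>x}$. Third, rewrite $\prob{}{X>x} = 1-\prob{}{X\leq x}$ to reach the claimed inequality $\expec{}{X}\geq x(1-\prob{}{X\leq x})$.

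There is no real obstacle here — the only thing to be slightly careful about is the strict-versus-weak inequality in the event, i.e.\ using $\{X>x\}$ rather than $\{X\geq x\}$, so that the complement is exactly $\{X\leq x\}$ and the pointwise bound $X\geq x\cdot\indic{X>x}$ holds without needing $X\geq x$ on the boundary. With that convention the argument is immediate. (In the intended application $X = R(G)$, which is bounded and nonnegative, so all expectations are finite and no integrability subtleties arise; combined with Theorems~\ref{theo:small-regime}, \ref{theo:middle-regime}, and~\ref{theo:large-regime} this yields the lower bounds on $\expec{}{R_n}$ claimed in Theorem~\ref{th:main}.)
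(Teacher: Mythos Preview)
Your proof is correct and essentially identical to the paper's: both establish the pointwise bound $X\geq x\,\indic_{\{X>x\}}$ (the paper does this by splitting $X=X\indic_{\{X\leq x\}}+X\indic_{\{X>x\}}$ and dropping the first nonnegative term) and then take expectations. There is no substantive difference in approach.
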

\begin{proof}
Let $\indic_{A}$ denote the indicator of the event $A$ occurring. 
Just observe that
\begin{align*}
\expec{}{X} & = 
  \expec{}{X\indic_{\setof{X\leq x}}} + \expec{}{X\indic_{\setof{X> x}}} 
  \geq 
  x\,\expec{}{\indic_{\setof{X> x}}} 
  = 
  x\left(1-\prob{}{X\leq x}\right). \qedhere
\end{align*}
\end{proof}

Theorem~\ref{th:main} now follows 
  as a direct consequence of the preceding lemma, 
  Theorems~\ref{theo:small-regime}, \ref{theo:middle-regime},
  and~\ref{theo:large-regime}, 
  and the fact that $R(G)\leq k$.

\vspace{5mm}

\subsection*{Acknowledgements}

The authors would like to thank Carlos E.~Ferreira, Yoshiharu Kohayakawa, 
and Christian~Tjandraatmadja for some discussions in the preliminary 
stages of this work. 

\bibliographystyle{plain}
\bibliography{biblio}

\begin{thebibliography}{10}

\bibitem{AdiBFFMSSTW07}
S.~Adi, M.~Braga, C.~Fernandes, C.~Ferreira, F.~Martinez, M.-F. Sagot,
  M.~Stefanes, C.~Tjandraatmadja, and Y.~Wakabayashi.
\newblock Repetition-free longest common subsequence.
\newblock {\em Discrete Appl. Math.}, 158(12):1315--1324, 2010.

\bibitem{AngibaudFRTV09}
S.~Angibaud, G.~Fertin, I.~Rusu, A.~Thévenin, and S.~Vialette.
\newblock On the approximability of comparing genomes with duplicates.
\newblock {\em J.~Graph Algorithms Appl.}, 13(1):19--53, 2009.

\bibitem{BaikDJ99}
J.~Baik, P.~Deift, and K.~Johansson.
\newblock On the distribution of the length of the longest increasing
  subsequence of random permutations.
\newblock {\em J.~Amer.~Math.~Soc.}, 12:1119--1178, 1999.

\bibitem{BHR00}
L.~Bergroth, H.~Hakonen, and T.~Raita.
\newblock A survey of longest common subsequence algorithms.
\newblock In {\em Proceedings of the 7th International Symposium on String
  Processing Information Retrieval (SPIRE)}, pages 39--48, 2000.

\bibitem{BlinBDS12}
G.~Blin, P.~Bonizzoni, R.~Dondi, and F.~Sikora.
\newblock On the parameterized complexity of the repetition free longest common
  subsequence problem.
\newblock {\em Information Processing Letters}, 112(7):272--276, 2012.

\bibitem{BonizzoniDVDFRV07}
P.~Bonizzoni, G.~Della~Vedova, R.~Dondi, G.~Fertin, R.~Rizzi, and S.~Vialette.
\newblock Exemplar longest common subsequence.
\newblock {\em IEEE/ACM Trans. Comput. Biol. Bioinformatics}, 4(4):535--543,
  2007.

\bibitem{BonizzoniDVDP10}
P.~Bonizzoni, G.~Della Vedova, R.~Dondi, and Y.~Pirola.
\newblock Variants of constrained longest common subsequence.
\newblock {\em Information Processing Letters}, 110(20):877--881, 2010.

\bibitem{Bop87}
R.~Boppana.
\newblock Eigenvalues and graph bisection: An average-case analysis.
\newblock In {\em Proceedings of the 28th Annual Symposium on Foundations of
  Computer Science (FOCS)}, pages 280--285. IEEE Computer Society, 1987.

\bibitem{BCLS87}
T.~Bui, S.~Chaudhuri, T.~Leighton, and M.~Sipser.
\newblock Graph bisection algorithms with good average case behavior.
\newblock {\em Combinatorica}, 7(2):171--191, 1987.

\bibitem{Coj06}
A.~Coja-Oghlan.
\newblock A spectral heuristic for bisecting random graphs.
\newblock {\em Random Struct.~Algorithms}, 29(3):351--398, 2006.

\bibitem{CK99}
A.~Condon and R.~Karp.
\newblock Algorithms for graph partitioning on the planted partition model.
\newblock {\em Random Struct.~Algorithms}, 18(2):116--140, 2001.

\bibitem{feller}
W.~Feller.
\newblock {\em An introduction to Probability Theory and its Applications},
  volume~1.
\newblock John Wiley \& Sons, third edition, 1968.

\bibitem{janson94}
S.~Janson.
\newblock Large deviation inequalities for sums of indicator variables.
\newblock Technical Report 1994:34, Uppsala U., 1994.
\newblock Available at
  {\small\texttt{http://www2.math.uu.se/$\sim$svante/papers/sj107.ps}}.

\bibitem{JS93}
M.~Jerrum and G.~B. Sorkin.
\newblock Simulated annealing for graph bisection.
\newblock In {\em Proceedings of the 34th Annual Symposium on Foundations of
  Computer Science (FOCS)}, pages 94--103. IEEE Computer Society, 1993.

\bibitem{KiwiLM05}
M.~Kiwi, M.~Loebl, and J.~Matou\v{s}ek.
\newblock Expected length of the longest common subsequence for large
  alphabets.
\newblock {\em Adv. Math.}, 197:480--498, 2005.

\bibitem{Sankoff99}
D.~Sankoff.
\newblock Genome rearrangement with gene families.
\newblock {\em Bioinformatics}, 15(11):909--917, 1999.

\bibitem{Sankoff01}
D.~Sankoff.
\newblock Gene and genome duplication.
\newblock {\em Current Opinion in Genetics \& Development}, 11(6):681--684,
  2001.

\bibitem{SankoffM83}
D.~Sankoff and J.~Kruskal, editors.
\newblock {\em Common subsequences and monotone subsequences}, chapter~17,
  pages 363--365.
\newblock Addison--Wesley, Reading, Mass., 1983.

\end{thebibliography}

\end{document}
